\theoremstyle{plain}
\newtheorem{thm}{Theorem}[section]
\newtheorem{prop}[thm]{Proposition}
\newtheorem{lem}[thm]{Lemma}
\newtheorem{cor}[thm]{Corollary}
\theoremstyle{definition}
\newtheorem{defn}[thm]{Definition}
\newtheorem*{exmp}{Example}
\theoremstyle{remark}
\newtheorem{rem}[thm]{Remark}
\newcommand{\set}[1]{\ensuremath{\{#1\}}}
\newcommand{\w}{\mathbf{w}}
\newcommand{\x}{\mathbf{x}}
\newcommand{\y}{\mathbf{y}}
\newcommand{\m}{\mathfrak{m}}
\newcommand{\siso}{\mathcal{S}}
\newcommand{\la}{\leftarrow}
\newcommand{\ra}{\rightarrow}
\newcommand{\G}{\Gamma}
\numberwithin{equation}{section}
\numberwithin{figure}{section}
\begin{document}

\author{Love Forsberg}
\title{Effective representations of Hecke-Kiselman monoids of type $A_n$}

\begin{abstract}
We prove effectiveness of certain representations of Hecke-Kiselman
monoids of type $A_n$ constructed by Ganyushkin and Mazorchuk and also 
construct further classes of effective representations for these monoids. 
As a consequence the effective dimension of  monoids of type $A_n$ is determined. 
We also show that odd Fibonacci numbers $F_{2n+1}$ appear as the cardinality
of certain bipartite HK-monoids and count the number of multiplicity
free elements in any HK-monoid of type $A_n$.
\end{abstract}
\maketitle
 
\section{Introduction}\label{s1}

Let $S$ be a monoid and $\mathcal{R}$ an integral commutative domain. A (finite dimensional) linear representation of $S$  over $\mathcal{R}$ is a homomorphism $\varphi$ from $S$ to the semigroup 
$\mathrm{Mat}_{n\times n}(\mathcal{R})$ of $n\times n$ matrices over $\mathcal{R}$. The representation $\varphi$ is called {\em effective} if different elements of $S$ are represented by different matrices. Note that a faithful representation of the semigroup algebra $\mathcal{R} S$ induces an effective representation of $S$, but the converse is false in general. The least $n\in\mathbb{N}$ such that $S$ has an effective representation in $\mathrm{Mat}_{n\times n}(\mathcal{R})$ is called the \emph{effective dimension} (of $S$ over $\mathcal{R}$)
and is denoted by $\mathrm{eff}.\dim_\mathcal{R}(S)$. For finite semigroups the problem of determining $\mathrm{eff}.\dim_\mathcal{R}(S)$ is effectively computable when $\mathcal{R}$ is an algebraically closed or a real closed field, see \cite{MS12}, but giving the answer as a closed formula is usually very hard.

C.~Kiselman defined in \cite{Kis02} a monoid generated by three operators $c,l,m$ with origins in 
convexity theory and showed that it has presentation
	\begin{multline}
		K=\langle c,l,m : c^2=c, l^2=l, m^2=m,\\
			clc=lcl=lc, cmc=mcm=mc, lml=mlm=ml\rangle.
	\end{multline}
In \cite{Kis02} it was shown that $K$ has $2^3$ idempotents and that $K$ has an effective representation
by (non-negative) integer valued $3\times 3$-matrices.

O.~Ganyushkin and V.~Mazorchuk generalized $K$ to a series of monoids $K_n$, called \emph{Kiselman monoids} (unpublished) given by the following presentation:
	\[K_n=\langle c_1,c_2,\cdots,c_n: c_i^2=c_i~ \forall i,
			c_ic_jc_i=c_jc_ic_j=c_ic_j ~\forall i\leq j \rangle.\]
It was shown by G.~Kudryavtseva and V.~Mazorchuk in \cite{KM09} that $K_n$ is a finite monoid
with $2^n$ idempotents and that $K_n$ has an effective representation by (non-negative) integer valued 
$n\times n$-matrices. The proof of the latter fact is technically rather involved.

Let $\G=(V(\G),E(\G))$ be a graph representing a disjoint union of simply laced Dynkin diagrams, 
$W_{\G}$ the corresponding Weyl group and $\mathcal{H}_q(W_\G)$, where $q\in\mathcal{R}$, 
the Hecke algebra of $W_\G$. By specializing $q=0$ we obtain an algebra which is isomorphic to the monoid 
algebra of the so-called {\em $0$-Hecke monoid} $\mathcal{H}_\G$ which has the following presentation:
	\begin{multline}
		\mathcal{H}_\G=\langle v\in V(\G): v^2=v~\forall v,\\
			vwv=wvw \text{ for all } \{v,w\}\in E(\G), vw=wv\text{ for all } \{v,w\}\not\in E(\G)\rangle.
	\end{multline}
	
O.~Ganyushkin and V.~Mazorchuk proposed in  \cite{GM11} a common generalization for $K_n$ and $\mathcal{H}_\G$ 
by introducing the so-called {\em Hecke-Kiselman monoids} which are defined as follows.

	\begin{defn}
		Let $\G=\big(V(\G),E(\G)\big)$ be a simple directed graph, i.e. a directed graph with no loops and at most one edge $x\to y$ for every ordered pair $(x,y)$. The \emph{Hecke-Kiselman monoid} $HK_\G$ of $\G$ is the quotient of the free monoid $\big(V(\G)\big)^*$ by the following relations:
		\begin{enumerate}[(I)]
			\item $x^2=x$ for all $x\in V(\G)$.
			\item If there is no edge between $x$ and $y$, then $xy=yx$.
			\item If there is an edge from $x$ to $y$ but no edge from $y$ to $x$, then $xyx=yxy=xy$.
			\item If there are edges in both directions between $x$ and $y$, then $xyx=yxy$.
		\end{enumerate}
		We refer to the above relations (including $x^2=x$) as \emph{edge relations}. Since there is no risk of confusion we will use $x\in\G$ as a shorthand for $x\in V(\G)$. A pair of vertices satisfying (II) is said to be \emph{non-adjacent}.
	\end{defn}

It is often convenient to think of a pair of edges $x\to y$ and $y\to x$ in $\G$ as a single undirected edge. 
In order to simplify statements this identification is done for the rest of the paper.
The $0$-Hecke monoid 
is recovered as a Hecke-Kiselman monoid by using the same (simply laced) Dynkin diagram and treating the edges as undirected edges. The Kiselman monoid $K_n$ corresponds
to the directed graph $\kappa_n$ with vertices $V(\kappa_n)=\{1,2,\cdots,n\}$ and edges $E(\kappa_n)=\{(i,j)|1\leq i<j\leq n\}$. We call this graph the \emph{Kiselman graph}. 
O.~Ganyushkin and V.~Mazorchuk proved that the semigroups $HK_\G$ and $HK_{\G'}$ are isomorphic if and only if
$\G$ and $\G'$ are isomorphic graphs \cite{GM11}. 

Mixed graphs and the corresponding Hecke-Kiselman monoids have been found to be a suitable mileau to study computer simulations which discretize continuous dynamical systems, via so-called sequential dynamical systems \cite{CA15}\footnote{The paper by Collina and D'Andrea cites this paper, due to a long period where  a draft of this paper was available arXiv. However, these citations are not needed for the strictly mathematical content, but only as a guide to the reader.}.  
	
\begin{defn}
		A simple directed graph $\G$ is said to be of \emph{type $A_n$} if its underlying undirected graph is the Dynkin diagram $A_n$ for some $n\in\mathbb{N}$. A graph of type $A_n$ with exactly one sink and one source is called \emph{linearly ordered}. The special case when sink and source coincide can only happen when $n=1$. A \emph{canonical order} on the vertices of a graph of type $A_n$ is one where neighboring vertices have indices that differ by 1. There are two canonical orders on a graph of type $A_n$ ($n\geq 2$). If the graph is linearly ordered, we additionally ask that the canonical order is from the source to the sink, i.e.
		\[\xymatrix{v_1\ar[r]&v_2\ar[r]&\cdots\ar[r]&v_n}.\]
	\end{defn}
	
We say that $HK_\G$ is of type $A_n$ if $\G$ is of type $A_n$. The monoids $HK_\G$ of type $A_n$
naturally appear as monoids of projection functors as defined by A.-L.~Grensing in \cite{Gre12,Paa11},
see also some further development in \cite{GrMa}. 

In view of the above the following questions arise naturally:
	\begin{enumerate}
		\item Does $HK_\G$ have $2^n$ idempotents, where $n=|V(\G)|$ is the number of vertices? If not, 
		 how many?
		\item Does $HK_\G$ have an effective representation by $n\times n$- matrices over 
		$\mathbb{Z}$?
		\item For which graphs $\G$ is the Hecke-Kiselman monoid $HK_\G$ finite? Can we calculate its cardinality, either explicitly or with some algorithm?
	\end{enumerate}
	
The second question seems to be a question about one ring, but a representation with only integer entries works universally for all rings of characteristic 0.  
	
These are the questions which we address in the present paper for various families of graphs. 
After the first version of the present paper appeared, 
R.~Aragona and A.~D'Andrea addressed the cardinality problem in the case $\Gamma$ is small (has at most four vertices) 
and discovered a nontrivial example of a graph with an unoriented edge for which the cardinality of the Hecke-Kiselman monoid is infinite, see \cite{AA13}. In this paper we study only graphs for which there are no unoriented edges,
or, equivalently, there is at most one oriented edge between any pair of vertices.

The paper is organized as follows: In Section~\ref{s2} we collected some basic notation and definitions 
from the combinatorics of words. Section~\ref{s3} contains some preliminary results on linear representations of 
some Hecke-Kiselman monoids. In Section~\ref{s4} we discuss effective representations and cardinalities of the Hecke-Kiselman monoids considered in Section~\ref{s3}. In Section~\ref{s5} we investigate obstructions to generalize our methods to further classes of  Hecke-Kiselman monoids.

All representations in this paper are linear. We set $\underline{n}=\{1,2,\cdots,n\}$.

\section{Words, content and canonical projections}\label{s2}

If $\G$ is a disjoint union of $\G_1$ and $\G_2$, then it is easy to see that 
\[HK_\G=HK_{\G_1}\oplus HK_{\G_2}.\] Thus we only need to study connected graphs.

We will use the bold font to denote a word $\w\in\big(V(\G)\big)^*$. Elements in $HK_\G$ are equivalence classes of words and are denoted by brackets: $[\w]\in HK_\G$. The empty word is denoted by $\varepsilon$.
We will make use of two binary relations on $\big(V(\G)\big)^*$: 
	\[\w\sim\w':\iff[\w]=[\w'],\text{ and}\]
	\[\w\approx\w':\iff\big(\w=\mathbf{xyz}\text{ and }\w'=\mathbf{xy'z},\text{ where }\mathbf{y}=\mathbf{y'}\text{ is an edge relation or }\w=\w'\big)\]
Note that $\sim$ is an equivalence relation while  $\approx$ is not. Moreover, $\sim$ is the transitive closure of $\approx$. Hence the statement $\w\approx\w'$ is stronger than the statement $\w\sim\w'$.

For each word $\w$ we may define the {\em content} $\mathfrak{c}(\w)\subset V(\G)$ as the set of vertices that appear at least once in $\w$. Note that the equality $\mathfrak{c}(\w)=\mathfrak{c}(\w')$ holds for each of the edge relations $\w=\w'$. Since $\mathfrak{c}(\mathbf{xyz})=\mathfrak{c}(\mathbf{x})\cup\mathfrak{c}(\mathbf{y})\cup\mathfrak{c}(\mathbf{z})$, this implies $\mathfrak{c}(\w)=\mathfrak{c}(\w')$ when $\w\approx\w'$, and, by transitivity, when $\w\sim\w'$. Thus we may define the content $\mathfrak{c}([\w])$ for each element in $[\w]\in HK_\G$.
Note that $\mathfrak{c}([\w][\w'])=\mathfrak{c}([\w])\cup\mathfrak{c}([\w'])$. In particular $\mathfrak{c}([\w])=\mathfrak{c}([\w'])$ implies $\mathfrak{c}([\w])=\mathfrak{c}([\mathbf{ww'}])$, which in turn implies that the set of elements with a fixed content form a subsemigroup. Thus $HK_\G$ is $\Lambda$-graded, where $\Lambda$ is the lattice consisting of subsets of $V(\G)$ and with usual join and meet.
	
Let $\G'\subset\G$ be a subgraph. Then $HK_{\G'}$ is a quotient of $HK_\G$ in the natural way. If $\G'$ is a full subgraph, then $HK_{\G'}$ is a submonoid of $HK_\G$, and the function defined by sending each $x\in V(\G')$ to itself and each $x\in V(\G\setminus\G')$ to $\varepsilon$ 
extends uniquely to a homomorphism $p:HK_\G\to HK_{\G'}$ which is called the {\em canonical projection onto $HK_{\G'}$}. This slight misuse of the word \emph{projection} is justified by the fact that it linearizes to a projection in the semigroup algebras.

\section{Preliminary results}\label{s3}

Let $\mathcal{R}$ be an integral domain and $W=\bigoplus_{v\in V(\G)}\mathcal{R}v$ the formal vector space
over $\mathcal{R}$ with basis $V(\G)$. Let $f:E(\G)\to \mathcal{R}\setminus\{0\}$ be a function (we will
call it a {\em weight} function). We denote the weight on the edge 
from $x$ to $y$ by $f_{xy}$ (assuming it exists). For arbitrary vertices $x$ and $y$, set
	\[\theta_x^f(y)=\begin{cases}
		y, & x\neq y;\\
		\sum_{z\to x}f_{zx}z,& x=y;
	\end{cases}\]
and extend this by linearity to an endomorphism of $W$. The empty sum, which happens when $x=y$ is source, is defined to be 0, as usual. Define the map $R_f:V(\G)\to End_\mathcal{R}(W)$ by $x\mapsto\theta_x^f$. This uniquely extends to a homomorphism $R_f:\big(V(\G)\big)^*\to End_\mathcal{R}(W)$
using the fact that $V(\G)$ is a set of free generators of $\big(V(\G)\big)^*$. The endomorphisms $\theta_x^f$ 
will be called \emph{atomic}. If the weight function is fixed in advance, we sometimes omit 
it from the notation and write simply $\theta_x$ for $\theta_x^f$. This construction generalizes the
construction of ``linear integral representations'' in \cite{GM11}, and the following statement generalizes 
\cite[Proposition~7]{GM11}.

\begin{thm}
$R_f$ induces a well-defined homomorphism $HK_\G\to End_\mathcal{R}(W)$.
\end{thm}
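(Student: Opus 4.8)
The plan is to verify that the map $R_f$ respects each of the defining edge relations (I)–(IV) of the Hecke-Kiselman monoid. Since $R_f$ is already a well-defined homomorphism from the free monoid $(V(\G))^*$ to $\mathrm{End}_{\mathcal{R}}(W)$, to show it descends to $HK_\G$ it suffices to check that for every edge relation $\w=\w'$ we have $R_f(\w)=R_f(\w')$, i.e.\ that the corresponding compositions of atomic endomorphisms agree. Because the relations only ever involve one or two vertices, the whole verification reduces to understanding how $\theta_x$ and $\theta_y$ act, and it is cleanest to track the action on a basis vector $v\in V(\G)$, splitting into cases according to whether $v\in\{x,y\}$ or not.

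First I would dispose of the easy relations. For relation (I), idempotency $\theta_x^2=\theta_x$ follows because $\theta_x$ fixes every basis vector $y\neq x$, and on $x$ it outputs $\sum_{z\to x}f_{zx}z$, a combination of vectors $z\neq x$ (as $\G$ is simple, there is no loop at $x$), each of which $\theta_x$ then fixes; so applying $\theta_x$ again changes nothing. For relation (II), when there is no edge between $x$ and $y$: the only basis vectors either map can alter are $x$ and $y$, and crucially $x$ does not appear in $\theta_y(y)=\sum_{z\to y}f_{zy}z$ (since $x\not\to y$) and symmetrically $y$ does not appear in $\theta_x(x)$. A short case check on each basis vector then shows $\theta_x\theta_y=\theta_y\theta_x$.

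The substantive cases are (III) and (IV), where there is an edge $x\to y$. Here I would compute the two- and three-fold compositions directly on basis vectors. The key local observation is that $\theta_y(x)=x$ (no change, since $x\neq y$) while $\theta_y(y)=\sum_{z\to y}f_{zy}z$ contains the term $f_{xy}x$ together with other terms $z\neq x,y$; and $\theta_x$ fixes every vector except $x$. For relation (III), where $x\to y$ but $y\not\to x$, one checks that $x$ does \emph{not} occur in $\theta_x(x)=\sum_{z\to x}f_{zx}z$, and verifies $\theta_x\theta_y\theta_x=\theta_y\theta_x\theta_y=\theta_x\theta_y$ as endomorphisms by evaluating all three on each basis vector. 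For relation (IV), with edges in both directions, one verifies the single equality $\theta_x\theta_y\theta_x=\theta_y\theta_x\theta_y$; the two sides need no longer equal $\theta_x\theta_y$, so the computation must genuinely confirm the coincidence of the two triple products, and this is where the weights $f_{xy},f_{yx}$ interact.

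The main obstacle will be the three-fold composition bookkeeping in case (IV): one must carefully expand $\theta_x\theta_y\theta_x(v)$ and $\theta_y\theta_x\theta_y(v)$ for $v\in\{x,y\}$ and confirm that the resulting linear combinations agree term by term. The delicate point is to keep track of which basis vectors $z$ (the in-neighbors of $x$ and of $y$) survive each application, remembering that $\theta_x$ only ever modifies the coefficient of $x$ and $\theta_y$ only that of $y$; away from $x$ and $y$ both compositions act as the identity, so it is enough to match coefficients on the two vectors $x$ and $y$. I expect everything else to be routine once the atomic actions are set up carefully.
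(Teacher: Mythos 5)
Your treatment of relations (I), (II), (III) follows the paper's proof essentially verbatim: a case-by-case evaluation of the atomic endomorphisms on basis vectors, using that $\G$ is simple (no loops) for (I) and the absence of the relevant edges for (II) and (III). Those parts are fine, with one small imprecision: for (III) the observation you need is that $y$ does not occur in $\theta_x(x)=\sum_{z\to x}f_{zx}z$ (because $y\not\to x$), not that $x$ does not occur there; it is this fact that makes $\theta_y$ act as the identity on the image of $\theta_x\circ\theta_y$ and yields $xy=yxy$.

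The genuine gap is your case (IV). The equality $\theta_x\theta_y\theta_x=\theta_y\theta_x\theta_y$ that you expect the computation to ``genuinely confirm'' is in fact \emph{false} whenever there are edges in both directions and the weights are nonzero. Take $\G$ with two vertices $x,y$ and edges both ways; then $\theta_x(x)=f_{yx}y$, $\theta_y(y)=f_{xy}x$, and each map fixes the other basis vector. Evaluating the composition corresponding to $xyx$ (innermost factor acting first) at the basis vector $x$ gives
\[
x\mapsto f_{yx}\,y\mapsto f_{yx}f_{xy}\,x\mapsto f_{yx}^2f_{xy}\,y,
\]
while the composition corresponding to $yxy$ gives
\[
x\mapsto x\mapsto f_{yx}\,y\mapsto f_{yx}f_{xy}\,x.
\]
Since $x$ and $y$ are linearly independent, $\mathcal{R}$ is a domain and $f$ takes only nonzero values, these two results disagree. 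This is precisely the computation in the remark immediately following this theorem in the paper, which exploits the failure to show that $R_f$ cannot handle unoriented edges. The theorem is therefore true only under the paper's standing assumption, stated in the Introduction, that $\G$ has no unoriented edges, equivalently at most one directed edge between any pair of vertices; under that hypothesis relation (IV) never occurs in the presentation of $HK_\G$, and the correct proof (the paper's) checks (I)--(III) and stops. So your plan is not merely incomplete at (IV): the verification you propose there would fail, and it must be replaced by invoking the standing hypothesis that excludes relation (IV) altogether.
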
	

\begin{proof}
We have to check that $R_f$ respects the edge relations. 
We start with the relation $x^2=x$. Since $\G$ is simple, an edge $z\to x$ implies $z\neq x$. Thus 
	\[\theta_x\circ\theta_x(y)=\begin{cases}
		y, & x\neq y;\\
		\sum_{z\to x}f_{zx}\theta_x(z), & x=y \end{cases}=
		\begin{cases}
		y, & x\neq y;\\
		\sum_{z\to x}f_{zx}z, & x=y \end{cases}=
	\theta_x(y).\]

		Relation $xy=yx$ for $x$ and $y$ different and non-adjacent. 
		\[\theta_x\circ\theta_y(z)=\begin{cases}
			\theta_x(z), & y\neq z;\\
			\sum_{\omega\to y}f_{\omega y}\theta_x(\omega), & y=z\end{cases} =
			\begin{cases}
			z, & x,y\neq z;\\
			\sum_{\omega\to x}f_{\omega x}\omega, & x=z; \\
			\sum_{\omega\to y}f_{\omega y}\omega, & y=z. 
			\end{cases}\]
		As the right hand side is symmetric in $x$ and $y$, so is the left hand side.

		Relation $xyx=yxy=xy$ when there is a directed edge from $x$ to $y$.
		\[\theta_x\circ\theta_y\circ\theta_x(z)=
		\begin{cases}
		\theta_x\circ\theta_y(z), & x\neq z;\\
		\sum_{\omega\to x}f_{\omega x}\theta_x\circ \theta_y(\omega), & x=z
		\end{cases}
		=\begin{cases}
		z, & x,y\neq z;\\
		\sum_{\omega\to y}f_{\omega y}\theta_x(\omega), & y=z;\\
		\sum_{\omega\to x}f_{\omega x}\omega, & x=z.
		\end{cases}\]
		Since there is a directed edge from $x$ to $y$ we have
		\[\sum_{\omega\to y}f_{\omega y}\theta_x(\omega)=\sum_{\omega\to y,x\neq \omega}f_{\omega y}\omega+f_{xy}\sum_{\omega \to x}f_{\omega x}\omega.\]
		On the other hand
		\[\theta_x\circ\theta_y(z)=
		\begin{cases}
		\theta_x(z), & y\neq z;\\
		\sum_{\omega\to y}f_{\omega y}\theta_x(\omega), & y=z\end{cases}
		=\begin{cases}
		z,& x,y\neq z;\\
		\sum_{\omega\to x}f_{\omega x}\omega, & x=z;\\
		\sum_{\omega\to y}f_{\omega y}\theta_x(\omega), & y=z.\end{cases}\]
		This implies $xyx=xy$. Observe that $\theta_x\circ\theta_y(z)$ has no $y$ component for any $z$. Thus, by definition, $\theta_y$ acts as identity on $\theta_x\circ\theta_y(z)$ and we get $xy=yxy$.
	\end{proof}
	
	In the case $\G$ is linearly ordered of type $A_n$, $f$ is constantly equal to $1$ and $\mathbb{Z}\subset\mathcal{R}$ we have that $R_f$ is effective, as proved in \cite{GM11}. We denote a constant function $E(\G)\to\mathcal{R}$ with value $c$ simply by $c\in\mathcal{R}$ and the corresponding representation by $R_c$.
	
\begin{lem}
Let $\G$ be linearly ordered of type $A_n$. Then any choice of the function $f$ gives an effective representation $R_f$ of $HK_\G$.
\end{lem}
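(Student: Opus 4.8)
The plan is to reduce the statement to the already-known case $f\equiv 1$ by a diagonal change of basis that simultaneously rescales all edge weights to $1$.

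First I would record the special shape of the atomic endomorphisms when $\G$ is linearly ordered, with canonical order $1\to 2\to\cdots\to n$. Here the only edge into a vertex $i$ is $i-1\to i$, so $\theta_i^f(i)=f_{i-1,i}\,(i-1)$ for $i\ge 2$, while $\theta_1^f(1)=0$ and $\theta_i^f(j)=j$ for $j\ne i$. Thus, in the basis $1,\dots,n$, the matrix of $\theta_i^f$ agrees with that of $\theta_i^1$ except that its single nonzero off-diagonal entry now carries the factor $f_{i-1,i}$.

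Next I would introduce the diagonal matrix $D=\mathrm{diag}(d_1,\dots,d_n)$ with $d_1=1$ and $d_i=\prod_{k=1}^{i-1}f_{k,k+1}$; these are nonzero since $\mathcal{R}$ is a domain and $f$ is nowhere zero, so $D$ is invertible over the fraction field $\mathcal{F}=\mathrm{Frac}(\mathcal{R})$. A direct check on basis vectors, using the relation $d_i=d_{i-1}f_{i-1,i}$, gives $D\,\theta_i^f\,D^{-1}=\theta_i^1$ for every $i$ (the case $i=1$ being trivial, as both endomorphisms kill the first basis vector). Since $D$ is fixed and conjugation is multiplicative, the telescoping product yields
\[ R_f(\w)=D^{-1}R_1(\w)\,D \quad\text{for every word }\w, \]
or, entrywise, $R_f(\w)_{ab}=(d_b/d_a)\,R_1(\w)_{ab}$, where each factor $d_b/d_a$ is a nonzero element of $\mathcal{F}$.

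To conclude, note that because each scaling factor $d_b/d_a$ is invertible in $\mathcal{F}$, we have $R_f(\w)=R_f(\w')$ in $\mathrm{Mat}(\mathcal{F})$ if and only if $R_1(\w)=R_1(\w')$ there; and since the entries of both $R_f$ and $R_1$ already lie in $\mathcal{R}$, equality over $\mathcal{F}$ coincides with equality over $\mathcal{R}$. As $R_1$ is effective by the result quoted from \cite{GM11}, the equality $R_1(\w)=R_1(\w')$ forces $[\w]=[\w']$, whence $R_f$ is effective as well. The genuine content of the argument is spotting this diagonal conjugation; the only point requiring care is that the rescaling factors $d_i$ are in general not units of $\mathcal{R}$, which is precisely why the conjugacy must be performed over $\mathcal{F}$, together with the (routine) observation that, since the matrices involved are integral with $\mathbb{Z}\subseteq\mathcal{R}$, the effectiveness of $R_1$ established in \cite{GM11} transfers from $\mathbb{Z}$ up to $\mathcal{F}$.
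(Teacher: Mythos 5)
Your proof is correct, but it takes a genuinely different route from the paper's. The paper never changes basis: it uses the fact that in a linearly ordered graph every vertex is the target of at most one arrow, so $R_f([\w])$ sends each basis vector $x$ to a scalar multiple $c_{x,f,\w}\,y$ of a single vertex $y$; it then tracks the trajectory of $x$ through the letters of $\w$ to show that the target $y$ and the vanishing or non-vanishing of $c_{x,f,\w}$ are independent of $f$. Since for $f\equiv 1$ every nonzero coefficient equals $1$, this recovers $R_1([\w])$ from $R_f([\w])$, and effectiveness of $R_1$ from \cite{GM11} finishes the argument. Your diagonal conjugation $D\theta_i^f D^{-1}=\theta_i^1$ with $d_i=\prod_{k<i}f_{k,k+1}$ replaces this coefficient bookkeeping by a single identity and in fact proves something stronger: all the representations $R_f$ are isomorphic to $R_1$ over $\mathcal{F}=\mathrm{Frac}(\mathcal{R})$, not merely simultaneously effective. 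It also reaches further: a normalization $d$ with $d_y=d_x f_{xy}$ along every edge exists whenever the underlying undirected graph has no cycles, so your argument applies verbatim to an arbitrary orientation of a type $A_n$ graph (indeed of any forest), a case where the paper's trajectory argument breaks down because a sink is the target of two arrows and $R_f([\w])(x)$ need no longer be a multiple of a single vertex; the paper instead treats such graphs later via the gluing theorem (Theorem~\ref{thmmain}). What the paper's method buys in exchange is that it stays entirely inside $\mathcal{R}$, with no passage to the fraction field. Your two points of care --- that $D$ is inverted only over $\mathcal{F}$, and that effectiveness of $R_1$ transfers along the injections $\mathbb{Z}\subseteq\mathcal{R}\subseteq\mathcal{F}$ because distinct integer matrices remain distinct --- are exactly the right ones, so the argument is complete.
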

	
It is important to recall that all values of $f$ are non-zero by definition. 
	
\begin{proof}
For a linearly ordered $\G$ of type $A_n$ each vertex of $\G$ is a target of at most one arrow.
Therefore the definition of $R_f$ implies that for any $v,x\in V(\G)$ the linear transformation 
$R_f([v])$ maps $x$ to a scalar multiple of
some other vertex, say $y$. By induction on the length of a word it follows that
for any $x\in V(\G)$ and $[\w]\in HK_\G$ there exists $v\in V(\G)$ and $c_{x,f,\w}\in\mathcal{R}$ 
such that $R_f([\w])(x)=c_{x,f,\w}y$. Certainly, $c_{x,f,\w}$ depends on $f$. However, we claim that 
\begin{equation}\label{eq1}
 c_{x,f,\w}\neq 0\quad\text{ implies that }\quad c_{x,f',\w}\neq 0\quad \text{ for any other }\quad f',
\end{equation}
or, in other words, that the fact that $c_{x,f,\w}$ is non-zero does not depend on $f$.
Claim  \eqref{eq1} and  effectiveness of $R_1$ established in \cite{GM11} imply the claim of our lemma.

To prove claim \eqref{eq1} assume that $c_{x,f,\w}\neq 0$. Let $\w=w_kw_{k-1}\cdots w_1$ and set $y_0=x$.
For $i=1,\dots,k$ define recursively $y_i$ as the unique vertex of $\Gamma$ such that 
$R_f([w_iw_{i-1}\cdots w_1])(x)=c_i y$ for some non-zero $c_i\in\mathcal{R}$. This is well-defined
as $c_{x,f,\w}\neq 0$. We have $c_k=c_{x,f,\w}\neq 0$.
The definition of $R_f$ and the fact that $\mathcal{R}$ is a domain imply that for $f'$ we will have that 
$R_{f'}([w_iw_{i-1}\cdots w_1])(x)=c'_i y$ for some non-zero $c'_i\in\mathcal{R}$.
In particular, $c'_k\neq 0$ and the claim follows.
\end{proof}

\begin{rem}
We note that the representation $R_f$ is not effective if $\G$ contains the subgraph
\[\xymatrix{u\ar[r]&v\ar@{-}[r]&w}\] 
To simplify notation, let 
\[A=\sum_{z\to v,z\neq w}f_{zv}z\text{ and }B=\sum_{z\to w,z\neq v}f_{zw}z.\]
Then $\theta_v(A)=\theta_w(A)=A$ and $\theta_v(B)=\theta_w(B)=B$.
From the definition of $R_f$ we have
\begin{multline*}\theta_v\circ\theta_w\circ\theta_v(v)=\theta_v\circ\theta_w (f_{wv}w+A)=
A+f_{wv}\theta_v\circ\theta_w (w)=\\
A+f_{wv}\theta_v (B+f_{vw}v)=A+f_{wv}B+f_{wv}f_{vw}\theta_v (v)=\\
A+f_{wv}B+f_{wv}f_{vw}(A+f_{wv}v)=A+f_{wv}B+f_{wv}f_{vw}A+f_{wv}^2f_{vw}v
\text{, while}\end{multline*}
\begin{multline*}\theta_w\circ\theta_v\circ\theta_w(v)=\theta_w\circ\theta_v(v)=\\
\theta_w(A+f_{wv}w)=A+f_{wv}\theta_w(w)=A+f_{wv}(B+f_{vw}v)=A+f_{wv}B+f_{wv}f_{vw}v.\end{multline*}
When we combine them we see that 
\[A+f_{wv}B+f_{wv}f_{vw}A+f_{wv}^2f_{vw}v=A+f_{wv}B+f_{wv}f_{vw}v\iff \]
\[f_{wv}f_{vw}A+f_{wv}^2f_{vw}v=f_{wv}f_{vw}v. \]
By definition of $A$ it can not contain $v$ as a summand, which gives $f_{wv}f_{vw}A=0A$. By assumption, $A$ contains $u$ as a summand with non-zero coefficient, so $f_{wv}f_{vw}=0$. Since $\mathcal{R}$ is an integral domain, this implies $f_{wv}=0$ or $f_{vw}=0$. But $f$ only takes nonzero values by definition. 
If we allow $f$ to be zero on an edge $v\to w$ then straightforward calculations show that $R_f[wv]=R_f[vwv]$, showing that $f$ is not effective.
\label{double edges rep}
\end{rem}
	
We say that a word $\w$ is \emph{multiplicity free with respect to a vertex $v$} if $v$ appears 
at most once in $\w$. A word $\w$ is called \emph{multiplicity free} if it is multiplicity free with respect
to every vertex. An element $[\w]\in HK_{\G}$ is called \emph{multiplicity free} if $[\w]$ contains a
multiplicity free word. For $A\subset V(\G)$ define $\mathcal{MF}_A\subset \big(V(\G)\big)^*$ as the set 
of words which are multiplicity free with respect to all vertices in $A$. 
Let $\siso=\siso_{\G}\subset V(\G)$ denote the set of all sources and sinks. Multiplicity free words are easier to handle because they allow us to speak about \emph{the} position of a single vertex (assuming it exists).
Recall that a {\em subword} of a word
$v_1v_2\dots v_k$ is a word of the form $v_{i_1}v_{i_2}\dots v_{i_j}$ where
$1\leq i_1<i_2<\dots<i_j\leq k$. 

\begin{lem}\label{lemma:siso}
Let $A\subset \siso$ and $\w\in \big(V(\G)\big)^*$. Then $[\w]\cap\mathcal{MF}_A$ contains a subword of $\w$.
Furthermore, if $\w\sim \w'$ are both in $\mathcal{MF}_A$, then there exist a series of words 
$\w_i\in[\w]\cap \mathcal{MF}_A$, such that 
\[\w=\w_1\approx \w_2\approx\cdots\approx\w_k=\w'.\]
\end{lem}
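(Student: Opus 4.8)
The plan is to base everything on an \emph{absorption property}: if $v$ is a source then $v\mathbf{b}v\sim v\mathbf{b}$ for every word $\mathbf{b}$ with no occurrence of $v$, and dually, if $v$ is a sink, then $v\mathbf{b}v\sim\mathbf{b}v$. I would prove the source version by induction on $|\mathbf{b}|$, writing $\mathbf{b}=y\mathbf{b}'$. If $y$ is not adjacent to $v$ one commutes the final $v$ leftwards past $y$ and applies the inductive hypothesis to $v\mathbf{b}'v$. If $y$ is a neighbour, so that the relevant relation is $vyv=yvy=vy$, one first rewrites the prefix $vy$ as $vyv$, then applies the inductive hypothesis to the inner factor $v\mathbf{b}'v$, and finally contracts $vyv$ back to $vy$. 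The sink version then follows from the word-reversal anti-isomorphism $HK_\G\to HK_{\G^{\mathrm{op}}}$, which preserves multiplicities (hence each $\mathcal{MF}_A$) and interchanges sources with sinks.

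The first assertion I would obtain by deleting repeated letters one at a time. Suppose some $v\in A$ occurs at least twice and choose two consecutive occurrences, writing $\w=\mathbf{p}v\mathbf{q}v\mathbf{r}$ with $\mathbf{q}$ free of $v$. If $v$ is a source then absorption gives $v\mathbf{q}v\sim v\mathbf{q}$, so $\w\sim\mathbf{p}v\mathbf{q}\mathbf{r}$, which is $\w$ with a single letter deleted; if $v$ is a sink one deletes instead the first of the two occurrences. Each such step yields an equivalent word that is a proper subword of the previous one, so iterating terminates at a word in $[\w]\cap\mathcal{MF}_A$ which, being obtained from $\w$ by successive deletions, is a subword of $\w$.

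The second assertion is the heart of the lemma, and I would prove it by induction on $|A|$. For $A=\emptyset$ this is precisely the statement that $\sim$ is the transitive closure of $\approx$. For the inductive step write $A=B\cup\{v\}$ with $v\in\siso$; appealing to reversal we may assume $v$ is a source. Given $\w,\w'\in\mathcal{MF}_A$ with $\w\sim\w'$, the inductive hypothesis applied to $B$ furnishes a chain $\w=\mathbf{w}_1\approx\cdots\approx\mathbf{w}_k=\w'$ lying in $\mathcal{MF}_B$; what may fail is that some $\mathbf{w}_i$ contains $v$ more than once. To cure this I would apply the map $\rho$ that erases every occurrence of $v$ except the leftmost one. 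Since $\rho$ deletes only copies of $v$, it sends each $\mathbf{w}_i\in\mathcal{MF}_B$ to a word of $\mathcal{MF}_A$ (exactly one $v$, with the $B$-multiplicities unchanged), and it fixes the endpoints $\w,\w'$, which already contain at most one $v$. (If $v\notin\mathfrak{c}(\w)$ the original chain already lies in $\mathcal{MF}_A$ and there is nothing to do.)

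It then remains to check that $\rho$ converts each elementary step $\mathbf{w}_i\approx\mathbf{w}_{i+1}$ into at most one elementary step inside $\mathcal{MF}_A$; concatenating these produces the desired chain. Writing the step as the replacement of a factor $\alpha$ by $\beta$, if the underlying edge relation does not involve $v$ then $\alpha$ and $\beta$ are $v$-free, $\rho$ deletes the same copies of $v$ on both sides, and $\rho(\mathbf{w}_i)\approx\rho(\mathbf{w}_{i+1})$ by the same relation. If the relation involves $v$ then, $v$ being a source, it is $v^2=v$, a commutation $vz=zv$ with $z$ non-adjacent to $v$, or the relation $vyv=yvy=vy$ for a neighbour $y$; in each case a short analysis—according to whether a copy of $v$ already occurs to the left of the rewritten factor—shows that $\rho(\mathbf{w}_i)$ and $\rho(\mathbf{w}_{i+1})$ either coincide or are joined by a single edge relation inside $\mathcal{MF}_A$. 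The only delicate point, which I expect to be the main obstacle, is that a connecting word may carry a factor $yvy$ or $yy$ containing two copies of a neighbour $y$; this is harmless because such a factor already appears in some $\mathbf{w}_i\in\mathcal{MF}_B$, which forces $y\notin B$, so the repeated $y$ does not break membership in $\mathcal{MF}_A$. Apart from this local case analysis and the bookkeeping that the repair never disturbs the $B$-multiplicities, everything else is formal.
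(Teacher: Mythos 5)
Your proposal is correct and follows essentially the same strategy as the paper's proof: the first claim rests on the absorption relation $a\mathbf{q}a\sim a\mathbf{q}$ for a source $a$ (the paper derives it by inserting and then deleting copies of $a$, you by induction on $|\mathbf{q}|$), and the second claim is proved by taking an arbitrary chain of elementary relations, applying a retraction that deletes superfluous occurrences, and checking case by case that each elementary step becomes at most one elementary step. The only difference is organizational: your map $\rho$ strips one vertex $v\in A$ at a time inside an induction on $|A|$, which spares you the paper's extra cases for relations involving two letters of $A$ (its map $\psi$ handles all of $A$ simultaneously), and your observation that a repeated neighbour $y$ forced into the chain must lie outside $B$ is exactly the point that makes this work.
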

	
\begin{proof}
If $\w$ is already in $\mathcal{MF}_A$, we take $\w'=\w$ and we are done. Assume that $\w=\w_1a\w_2a\w_3$ is a word with at least two occurrences of $a\in A$, and that $a$ is a source (if $a$ is a sink, a similar argument works with all words reversed). Because of our restrictions on $\G$,  for any $x\in V(\G)$ we have one of the following: 
\begin{enumerate}
	\item $x=a$ and $ax=aa=aaa=axa$;
	\item there is a directed edge from $a$ to $x$ and $ax=axa$;
	\item $a$ and $x$ are nonadjacent and $ax=aax=axa$.
\end{enumerate}
Note that we have the relation $ax=axa$ in all cases. Therefore, for $\w_2=x_1x_2\cdots x_k$ we have 
\[\w=\w_1ax_1x_2\cdots x_ka\w_3\sim \w_1ax_1ax_2\cdots x_ka\w_3\sim\cdots \sim \w_1ax_1ax_2a\cdots ax_ka\w_3\sim\]
\[\w_1ax_1ax_2a\cdots ax_k\w_3\sim\cdots\sim\w_1ax_1x_2\cdots x_k\w_3=\w_1a\w_2\w_3.\]
Thus we have lowered the multiplicity of $a$ in $\w$ while leaving all other vertices untouched. Let $\psi_a:\big(V(\G)\big)^*\to\mathcal{MF}_{\set{a}}\subset\big(V(\G)\big)^*$ be the function which removes superfluous $a$ as above, and let $\psi=\psi_A$ be the composition of all $\psi_a$ for $a\in A$. It is well-defined by the previous remark, and the fact that $A$ is a finite set. Then we obtain the desired subword $\psi(\w)$ of $\w$
contained in $[\w]\cap\mathcal{MF}_A$. This proves the first claim of the lemma.

Given two words $\w\sim\mathbf{\tilde{w}}$ there is, by definition, a sequence of words $\w_i\in[\w]$, $i\in\underline{k}$, such that 
\[\w=\w_1\approx \w_2\approx\cdots\approx\w_k=\mathbf{\tilde{w}}.\]
We want to show that if $\mathbf{a}\approx\mathbf{b}$, then $\psi(\mathbf{a})\approx\psi(\mathbf{b})$, but it suffices to show that $\psi_a(\mathbf{a})\approx\psi_a(\mathbf{b})$ for all $a\in A$. If $\mathbf{a}=\mathbf{b}$ this is trivial. Assume that $\mathbf{a}\neq\mathbf{b}$By definition, $\mathbf{a}\approx\mathbf{b}$ means that 
$\mathbf{a}=\mathbf{a}_l\mathbf{c}\mathbf{a}_r$ and $\mathbf{b}=\mathbf{a}_l\mathbf{d}\mathbf{a}_r$, where $\mathbf{c}=\mathbf{d}$ is an edge relation (or equality). Depending on the letters which appear in the edge relation, we consider different cases.
		
{\bf Case~1.} Relations $x^2=x, xyx=yxy=xy$ or $xy=yx$ for $x,y\not\in A$. In this case the application of 
$\psi_a$ affects neither $\mathbf{c}$ nor $\mathbf{d}$. Assume that $a\in A$ is a source which appears in exactly one
of the words $\mathbf{a}_l$ or $\mathbf{a}_r$. Then the application of $\psi_a$ to both $\mathbf{a}$ and 
$\mathbf{b}$ deletes all but the leftmost occurrences of $a$. If $a\in A$ is a source which appears in both
$\mathbf{a}_l$ and $\mathbf{a}_r$, then the application of $\psi_a$ to both $\mathbf{a}$ and 
$\mathbf{b}$ deletes all occurrences of $a$ in $\mathbf{a}_r$ and all but the leftmost occurrences of $a$ in 
$\mathbf{a}_l$. Similarly one considers the case when $a\in A$ is a sink. It follows that 
$\psi_a(\mathbf{a})\approx\psi_a(\mathbf{b})$.
			
{\bf Case~2.} Relations $a^2=a,ax=xa$ and $axa=xax=ax$ where $a\in A$ and $x\not\in A$. 
We assume that $a$ is a source (the case when $a$ is a sink is done similarly).
If $a$ appears in $\mathbf{a}_l$, then $\psi_a$ deletes all $a$ in $\mathbf{a}_r$, $\mathbf{c}$ and $\mathbf{d}$
(and leaves just the leftmost occurrences of $a$ in $\mathbf{a}_l$). Note that our edge relations
become $\varepsilon=\varepsilon$, $x=x$ and $x=x^2$, respectively. It follows that in this case $\psi_a(\mathbf{a})\approx\psi_a(\mathbf{b})$. If $a$ does not appear in $\mathbf{a}_l$, then $\psi_a$ deletes all 
$a$ in $\mathbf{a}_r$ and leaves the leftmost occurrences of $a$ in $\mathbf{c}$ and $\mathbf{d}$. Note 
that our edge relations become $a=a$, $ax=xa$ and $ax=xax$, respectively. It follows that in this case
we have $\psi_a(\mathbf{a})\approx\psi_a(\mathbf{b})$, which may be equality, depending on the case.

{\bf Case~3.} Relation $ab=ba$ where $a,b\in A$. Similarly to the above, the application of 
$\psi_a$ does the same thing to the subword $\mathbf{a}_l$ of both $\mathbf{a}$ and $\mathbf{b}$,
it does the same thing to  the subword $\mathbf{a}_r$ of both $\mathbf{a}$ and $\mathbf{b}$,
and it maps $ab=ba$ to either $ab=ba$ or $a=a$ or $b=b$ or $\varepsilon=\varepsilon$, depending on
whether $a$ or $b$ appear in $\mathbf{a}_l$ (if they are sources) or in $\mathbf{a}_r$ (if they are sinks).
In all cases we get that $\psi_a(\mathbf{a})\approx\psi_a(\mathbf{b})$, which may be equality, depending on the case.

{\bf Case~4.} Relation $aba=bab=ab$, where $a,b\in A$. Here $a$ is a source and $b$ is a sink.
Similarly to the above, the application of $\psi_a$ does the same thing to the subword $\mathbf{a}_l$ of both 
$\mathbf{a}$ and $\mathbf{b}$ and it does the same thing to  the subword $\mathbf{a}_r$ of both $\mathbf{a}$ 
and $\mathbf{b}$. Depending on the appearance of $a$ in $\mathbf{a}_l$ and $b$ in $\mathbf{a}_r$,
the relation is either mapped to $ab=ab$ or to $a=a$ or to $b=b$ or to $\varepsilon=\varepsilon$.
Again, in all cases we get that $\psi_a(\mathbf{a})\approx\psi_a(\mathbf{b})$. The claim follows.

\end{proof}
	
\begin{lem}
Let $\G$ be a simple directed graph with at most one edge between any pair of vertices.
Let $\G'\subset \G$ be a full subgraph and let $\w$ be a word such that $\mathfrak{c}[\w]=V(\G')$. Then 
exactly one of the following is true:
\begin{enumerate}
\item $\G'$ contains an oriented cycle and $[\w]^k$ are pairwise distinct for all $k\in \mathbb{N}$. 
\item $\G'$ contains no oriented cycles and $[\w]^{|V(\G')|}$ is the zero element in $HK_{\G'}$.
\end{enumerate}
\end{lem}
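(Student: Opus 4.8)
The plan is to reduce at once to the case $\G'=\G$. Since $\mathfrak{c}([\w])=V(\G')$, the word $\w$ only involves vertices of $\G'$, so $[\w]$ may be regarded as an element of the submonoid (indeed retract) $HK_{\G'}\subseteq HK_\G$; both alternatives are internal statements about $HK_{\G'}$. Hence I may assume $\mathfrak{c}([\w])=V(\G)$ and put $n=|V(\G)|$. The two alternatives are visibly mutually exclusive, since a finite digraph either contains an oriented cycle or is acyclic, so it remains to prove the two implications; the weighted representation $R_f$ constructed above will be the main tool.

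In the acyclic case I would fix a topological ordering $v_1,\dots,v_n$, so that every edge $v_i\to v_j$ has $i<j$. Then each atomic endomorphism $\theta^f_{v_j}$ fixes every $v_k$ with $k\neq j$ and sends $v_j$ to $\sum_{v_i\to v_j}f_{v_iv_j}v_i$, a combination of strictly smaller-indexed vertices. Reading the product $R_f([\w])=\theta^f_{w_1}\circ\cdots\circ\theta^f_{w_L}$ from right to left as a branching ``trajectory'' starting at $v_j$, the index never increases, and it strictly decreases at least once (the first letter equal to $v_j$, which exists by full content, forces a jump to smaller indices). Thus $R_f([\w])$ is strictly upper triangular, and so $R_f([\w]^n)=R_f([\w])^n=0$. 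The remaining, and main, difficulty is to upgrade this matrix identity to the monoid identity $[\w]^n=0$: one must exhibit the zero of $HK_\G$, which I expect to be the class $[\mathbf z]$ of the ordered word $\mathbf z=v_1v_2\cdots v_n$, verify that it is a two-sided zero using the source and sink relations $sx=sxs$ and $xt=txt$ recorded in the proof of Lemma~\ref{lemma:siso}, and then straighten $\w^n$ to $\mathbf z$. I expect the cleanest route to the straightening is induction on $n$, peeling off a source $s=v_1$ and using $sx=sxs$ to migrate and coalesce the occurrences of $s$; the representation already pins the sharp exponent $n$.

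In the case that $\G$ contains an oriented cycle I would first pass to an induced cycle. A shortest directed cycle $C\colon x_1\to x_2\to\cdots\to x_m\to x_1$ admits no chord (a chord would yield a shorter cycle) and has $m\geq 3$ (the hypothesis forbids $2$-cycles), so the full subgraph on $V(C)$ is exactly the oriented $m$-cycle. The canonical projection $HK_\G\to HK_C$ is a homomorphism preserving full content, so it suffices to prove distinctness of powers of a full-content element in $HK_C$. The decisive feature of a cycle is that each vertex has a unique in-neighbour; hence every $\theta^f_x$ is a monomial matrix, and therefore so is $M:=R_f([\w])$, say $M e_j=\mu_j\,e_{\sigma(j)}$ for a self-map $\sigma$ of $V(C)$ and nonzero scalars $\mu_j$.

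Finally I would analyze the dynamics of $\sigma$. Being a self-map of a finite set, $\sigma$ has a periodic point $j_0$ of some period $p$, and the $(j_0,j_0)$ entry of $M^{p\ell}=R_f([\w]^{p\ell})$ equals $\nu^\ell$, where $\nu=\prod_{s=0}^{p-1}\mu_{\sigma^s(j_0)}$ is the weight accumulated around the $\sigma$-orbit. Tracking the trajectory shows that returning to $j_0$ forces the number of elementary jumps to be a positive multiple of $m$ (at least one jump occurs because all of $x_1,\dots,x_m$ occur in $\w$), so that $\nu=\lambda_0^{\,w}$ with $w\geq 1$, where $\lambda_0=\prod_{e\in C}f_e$ is the product of the weights around $C$. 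Choosing the weights over, say, $\mathcal{R}=\mathbb{Q}$ with $|\lambda_0|>1$ makes this entry of $M^{p\ell}$ tend to infinity with $\ell$; a coincidence $[\w]^a=[\w]^b$ with $a<b$ would make the sequence $R_f([\w]^k)$ eventually periodic and hence that entry bounded, a contradiction. Therefore the powers $[\w]^k$ are pairwise distinct. I expect the only genuinely delicate point here to be the bookkeeping that the $\sigma$-orbit winds around $C$ at least once, i.e.\ the positivity of $w$; the monomial structure makes everything else automatic.
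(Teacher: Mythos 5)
Your reduction to the case $\G'=\G$ and your treatment of the cycle case are sound and essentially reproduce the paper's argument: the paper likewise projects onto an oriented cycle $C$ via the canonical projection, observes that on a cycle every atomic endomorphism is a monomial matrix, and derives distinctness of powers from the strictly growing exponent of the weight (it uses the constant weight $2$ over $\mathbb{Z}$ and tracks the trajectory of a single basis vector rather than a $\sigma$-periodic point and a diagonal entry, but this is cosmetic). Your extra care in passing to a shortest, chordless cycle is harmless but not needed: the canonical projection $p:HK_\G\to HK_{\G'}$ is defined for arbitrary subgraphs, not only full ones, so one may project onto the cycle itself.

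The acyclic case, however, contains a genuine gap. What you actually establish there is the matrix identity $R_f([\w])^n=0$. But $R_f$ is not known to be effective for a general acyclic graph (the paper proves effectiveness only for linearly ordered graphs of type $A_n$ and certain glued unions, and Remark~\ref{double edges rep} already shows effectiveness can fail in general), so nilpotency of the matrix does not yield the monoid identity $[\w]^n=0$; as you say yourself, upgrading it is ``the main difficulty,'' and your proposal only sketches a plan for it. That plan --- identify the zero of $HK_\G$ with the class of a topologically ordered word $v_1v_2\cdots v_n$ and straighten $\w^n$ onto it by migrating sources --- is plausible, but it is precisely the nontrivial combinatorial content of the statement (essentially \cite[Lemma~12]{KM09}), and carrying it out with the exponent $|V(\G')|$, rather than some unspecified exponent, requires real work that is absent from the proposal. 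The paper sidesteps all of this with one observation: fix a topological order on the acyclic graph $\G'$; for nonadjacent $x,y$ the relations $xy=yx$ and $x^2=x$, $y^2=y$ imply $xyx=xy=yxy$, so \emph{all} defining relations of Kiselman's monoid $K_n$ hold in $HK_{\G'}$, i.e.\ $HK_{\G'}$ is a quotient of $K_n$; the claim then follows by applying \cite[Lemma~12]{KM09} in $K_n$ and passing to the quotient (the image of a full-content element is full-content, and the image of the zero is the zero). To complete your proof you should either adopt this reduction or supply the straightening induction in full; the representation-theoretic half you wrote out can then be discarded, as it is not what carries the argument.
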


\begin{proof}
Assume that $\G'$ contains an oriented cycle $C$ (which then necessarily has length at least $3$).
Using $p:HK_\G\to HK_C$ it is enough to prove the first claim under the assumption
$\G=\G'=C$ (since if $p([\w]^k)$ are pairwise distinct for all $k$, then $[\w]^k$ are pairwise distinct for all $k$ as well). 

Let the vertices in $C$ be enumerated by $\underbar{n}$, such that there is a directed edge from $v_i$ to 
$v_{i+1}$ for all $i\in\underbar{n}$. To separate elements, we choose the representation $R_2$ (with the 
ground ring $\mathcal{R}=\mathbb{Z}$) and prove that the images $R_2([\w^k])$ are pairwise different. 
Recall that each $\theta_i$ maps $v_i$ to $2v_{i-1}$ and $v_j$ to $v_j$ for $j\neq i$.
It follows that there is a transformation $t:V(C)\to V(C)$ and a set of non-negative integers $m_1,\cdots,m_n$ such that  $R_2([\w])(v_i)=2^{m_i}v_{t(i)}$. Moreover, $\mathfrak{c}([\w])=V(C)$ implies that $m_i\geq 1$ for each $i$. We define the sequence $n_i$ by $R_2([\w])(v_{t^{i-1}(1)})=2^{n_i}v_{t^i(1)}$, and $\overline{n_i}=\sum_{j=1}^i n_i$, that is, $n_1=m_1$, $n_2=m_{t(1)}$, $n_3=m_{t^2(1)}$, etc. Then 
\[R_2([\w^k])(v_1)=2^{n_1}R_2([\w^{k-1}])(v_{t(1)})=\cdots=2^{\overline{n_k}}v_{t^k(1)}.\]  
Since the exponent is strictly increasing in $k$, and since $R_2$ is effective, it follows that the action of $R_2[\w^k]$ is different for different $k$, and hence $[\w^k]$ are pairwise distinct elements.

Now assume that $\G$ contains no oriented cycles. Then $HK_\G$ is a quotient of 
Kiselman's semigroup and the second claim follows from \cite[Lemma~12]{KM09}.
\end{proof}
	
\begin{thm}
There is a bijection between idempotents in $HK_\G$ and  full subgraphs of $\G$ which do not contain
any oriented cycles.
\end{thm}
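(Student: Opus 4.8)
The plan is to realize the bijection through the content map. Since a full subgraph is determined by its vertex set, I would send an idempotent $e\in HK_\G$ to the induced subgraph $\G[\mathfrak{c}(e)]$ on its content, and show that this lands precisely in the acyclic full subgraphs, hitting each exactly once. The entire argument is powered by the preceding lemma on oriented cycles (which relies on the standing assumption that $\G$ has no unoriented edges), so most of the work is bookkeeping.

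First I would verify the map is well defined, i.e.\ that $\G[\mathfrak{c}(e)]$ is acyclic for every idempotent $e$. Choose a word $\w$ with $[\w]=e$ and put $\G'=\G[\mathfrak{c}(e)]$, so that $\mathfrak{c}([\w])=V(\G')$. If $\G'$ contained an oriented cycle, the first alternative of the preceding lemma would force the powers $[\w]^k=e^k$ to be pairwise distinct; but $e^2=e$ gives $e^k=e$ for all $k\geq 1$, a contradiction. Hence $\G'$ is acyclic, and the map takes values among acyclic full subgraphs.

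For injectivity, suppose $e,e'$ are idempotents with $\mathfrak{c}(e)=\mathfrak{c}(e')=A$ and $\G':=\G[A]$ acyclic. Every word representing $e$ or $e'$ uses only vertices of $A$, so both elements lie in the submonoid $HK_{\G'}$ and have full content there. Applying the second alternative of the preceding lemma to $e$ and to $e'$, each regarded as a word of content $V(\G')$, yields $e=e^{|A|}=0_{\G'}=(e')^{|A|}=e'$, where $0_{\G'}$ is the zero element of $HK_{\G'}$ and I used $e^{|A|}=e$, $(e')^{|A|}=e'$. Thus the content already determines the idempotent. For surjectivity, let $\G'$ be any acyclic full subgraph, set $A=V(\G')$, and pick any word $\w$ with $\mathfrak{c}([\w])=A$ (for instance the product of all vertices of $A$). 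The same lemma gives $[\w]^{|A|}=0_{\G'}$; the zero element is idempotent by definition, and since passing to powers does not change the content we have $\mathfrak{c}(0_{\G'})=\mathfrak{c}([\w])=A$, so $0_{\G'}$ maps to $\G'$.

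I do not anticipate a genuine obstacle, since the cycle lemma is assumed. The only points demanding care are the two reductions to a submonoid: confirming that every idempotent of content $A$ actually lies inside $HK_{\G[A]}$, so that the acyclic alternative of the lemma is applicable, and checking that the distinguished zero $0_{\G'}$ is an idempotent of content exactly $A$. Both are immediate from the multiplicativity of $\mathfrak{c}$ and the definition of a zero element, which closes the argument.
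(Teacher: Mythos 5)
Your proof is correct and takes essentially the same route as the paper: the bijection is induced by the content map, well-definedness follows from alternative (1) of the preceding lemma, and the converse rests on the zero element of $HK_{\G'}$ in the acyclic case. The only difference is cosmetic: you extract both injectivity and surjectivity directly from alternative (2) via $e=e^{|\mathfrak{c}(e)|}=0_{\G'}$, whereas the paper cites \cite{KM09} for the fact that the Kiselman quotient $HK_{\G'}$ is finite with a unique idempotent of maximal content, namely its zero.
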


\begin{proof}
Let $[\w]\in HK_\G$ be an idempotent and $\G'\subset\G$ be the full subgraph whose set of vertices is $\mathfrak{c}[\w]$.
Then $\G'$ contains no oriented cycles by the first claim of the previous lemma.

Conversely, let $\G'$ be a full subgraph of $\G$ which does not contain any oriented cycles.
Then $HK_{\G'}$ is a quotient of Kiselman's semigroup. In particular, $HK_{\G'}$ is finite and contains
a unique idempotent of maximal content, namely the zero element, see \cite{KM09}. The claim follows.
\end{proof}

\section{Main results}\label{s4}

A graph of type $A_n$ consists of linearly ordered pieces which glue together in sources and sinks in the interior. The number of linearly ordered pieces is one less than the number of sources and sinks. Since we know a lot about effective representations for the pieces (see the previous section), we would like to know what the gluing does to representations. 
	
\begin{defn}
Given a graph $\G$ and a vertex $a\in\G$ the \emph{source graph} $S_a$ is the full subgraph of $\G$ with the vertex set
\begin{displaymath}
V(S_a)=\{v\in\G\mid \text{ there exists a directed path from } v \text{ to } a\}. 
\end{displaymath}
\end{defn}

\begin{exmp}
Let $\G$ be the graph
\[\xymatrix{1\ar[dr]&&\ar[dl]2\ar[dr]&&\ar[dl]3\\
&4\ar[dr]\ar[dl]&&\ar[dl]5\ar[dr]&\\
6&&7&&8}\]
Then $S_7$ is the graph 
\[\xymatrix{1\ar[dr]&&\ar[dl]2\ar[dr]&&\ar[dl]3\\
&4\ar[dr]&&\ar[dl]5&\\
&&7&&}\]
and $S_6$ is the graph
\[\xymatrix{1\ar[dr]&&\ar[dl]2\\
&4\ar[dl]\\
6}\]
\end{exmp}

\begin{lem}
For a vertex $a\in\G$ let $p_a:HK_\G\to HK_{S_a}$ be the canonical projection. Then for any $[\w]\in HK_\G$ we have
the equality $R_f([\w])(a)=R_f(p_a[\w])(a)$. 
\end{lem}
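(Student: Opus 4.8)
The plan is to exhibit a subspace of $W$ that is stable under all atomic endomorphisms, on which every vertex outside $S_a$ acts as the identity, and on which the two sides of the claimed identity visibly agree. Since both $R_f$ and $p_a$ are honest homomorphisms on $HK_\G$, it suffices to prove the equality for a fixed word $\w=w_1w_2\cdots w_k$ representing $[\w]$, with $p_a([\w])$ represented by the subword of $\w$ obtained by deleting all letters not in $V(S_a)$.

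Let $U=\bigoplus_{v\in V(S_a)}\mathcal{R}v\subseteq W$ be the span of the vertices of the source graph; note $a\in U$ since $a\in V(S_a)$. The central step is to show that $U$ is invariant under every atomic endomorphism $\theta_x^f$. For $v\in V(S_a)$ we have $\theta_x^f(v)=v\in U$ whenever $x\neq v$, while $\theta_v^f(v)=\sum_{z\to v}f_{zv}z$; here each summand lies in $U$, because an edge $z\to v$ composed with the directed path from $v$ to $a$ produces a directed path from $z$ to $a$, so $z\in V(S_a)$. Hence $U$ is invariant. Next I would record the behaviour of the restrictions $\theta_x^f|_U$. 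If $x\notin V(S_a)$, then $x\neq v$ for every $v\in V(S_a)$, so $\theta_x^f$ fixes each basis vector of $U$ and $\theta_x^f|_U=\mathrm{id}_U$. If $x\in V(S_a)$, then since $S_a$ is a full subgraph it contains every edge $z\to x$ of $\G$, so the sum $\sum_{z\to x}f_{zx}z$ defining $\theta_x^f$ is identical to the one defining the atomic endomorphism of the representation $R_f$ of $HK_{S_a}$ on $U$; thus $\theta_x^f|_U$ coincides with that atomic map.

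Finally, $R_f([\w])$ is the composition of the atomic maps $\theta_{w_i}^f$, each of which restricts to $U$ by invariance. Deleting from this composition every letter $w_i\notin V(S_a)$ leaves the action on $U$ unchanged, since those letters restrict to $\mathrm{id}_U$, and this deletion is exactly the effect of $p_a$; the surviving letters restrict to the atomic maps of $HK_{S_a}$ by the previous step. Therefore $R_f([\w])|_U=R_f(p_a([\w]))|_U$, and evaluating at $a\in U$ gives the desired equality. The one point demanding care, and the main obstacle, is the invariance of $U$: everything rests on the observation that a preimage $z\to v$ of a vertex $v\in V(S_a)$ again lies in $V(S_a)$, which is precisely the closure property built into the definition of the source graph.
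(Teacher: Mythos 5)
Your proof is correct and follows essentially the same route as the paper: both identify the span of $V(S_a)$ as a subspace invariant under all of $HK_\G$, observe that vertices outside $S_a$ act as the identity there, and conclude that $R_f([\w])$ and $R_f(p_a([\w]))$ agree on this subspace, hence at $a$. Your write-up is in fact a bit more explicit than the paper's on the two key verifications (the closure property $z\to v$, $v\in V(S_a)$ $\Rightarrow$ $z\in V(S_a)$ giving invariance, and fullness of $S_a$ ensuring the restricted atomic maps coincide with those of $HK_{S_a}$), but these are exactly the points the paper's argument relies on implicitly.
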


\begin{proof}
Let $x\in V(\G)$ and $v\in S_a$. From the definition of $R_f$ we have that the linear span $L$ of all $v\in S_a$ is invariant with respect to the action of
$HK_{\G}$. Furthermore, for any $x\in V(\G)\setminus S_a$ we have $R_f([x])(v)=v$ for all $v\in S_a$, which
means that $R_f([x])$ acts as the identity on $L$. It follows that the actions of  
$R_f([\w])$ and $R_f(p_a[\w])$ on $L$ coincide.
\end{proof}
		
If $\G=\cup_{i\in I}\G_i$ is a union of full subgraphs which pairwise do not have any common edges and
$f^i$ is a collection of weight functions, then there is a unique weight function $f$ on $\G$ whose restriction
to $\G_i$ coincides with $f^i$ for every $i$. We call $f$ the \emph{extension} of the $\{f^i:i\in I\}$.

A subgraph $\G'\subset\G$ is called {\em path complete} if every oriented path in $\G$ which
starts from a vertex of $\G'$ and ends at a vertex of $\G'$ is contained entierly in $\G$.
For example, if $\G=\xymatrix{1\ar[rr]&&2\ar[rr]&&3}$, then the full subgraph with vertices
$1$ and $2$ is path complete while the full subgraph with vertices $1$ and $3$ is not path complete. 

A path complete subgraph is always a full subgraph, but the converse is false, in general.

Let $\G'$ be a path complete subgraph of $\G$ and $f$ a weight function on $E(\G)$. Let $f'$ denote the restriction of
$f$ to $\G'$ (with the corresponding representation $R_{f'}$ of $HK_{\G'}$). 
Consider the representation $R_f$ of $HK_\G$ on $W=\bigoplus_{v\in\G}\mathcal{R}v$.
Let $\G''=\cup_{v\in\G'}S_v$ be the full subgraph of $\G$ whose set of vertices consists of all vertices of $\G$
from which there is an oriented (but maybe trivial) path to a vertex of $\G'$. Clearly,
$\G'$ is a subgraph of $\G''$. Finally, let $\G'''$ be the full subgraph of $\G$ whose set of vertices
coincides with the set of all vertices of $\G''$ which do not belong to $\G'$.
Then both $X=\bigoplus_{v\in\G''}\mathcal{R}v$ and $Y=\bigoplus_{v\in\G'''}\mathcal{R}v$ are 
invariant under the action of $HK_\G$. Indeed, by construction $\G''$ contains all arrows pointing to some vertex in $\G''$, which implies that $X$ is invariant under the action of $HK_\G$. The same reasoning is valid for $Y$ when we add the fact that $\G'$ is path complete. Let $\rho$ be the corresponding representation of $HK_\G$ on $X/Y$.

\begin{prop}\label{thm:path}
The representations $\rho$ and $R_{f'}\circ p$ of $HK_\G$ are isomorphic.
\end{prop}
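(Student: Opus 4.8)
The plan is to write down an explicit isomorphism between the two representation spaces and check that it intertwines the two actions on generators. Since $V(\G'')=V(\G')\sqcup V(\G''')$ and $Y=\bigoplus_{v\in\G'''}\mathcal{R}v$, the quotient $X/Y$ has as a basis the cosets $\overline{v}:=v+Y$ for $v\in V(\G')$; thus $X/Y$ has dimension $|V(\G')|$, which is also the dimension of the space $W'=\bigoplus_{v\in\G'}\mathcal{R}v$ on which $R_{f'}$ (and hence $R_{f'}\circ p$) acts. I would therefore define the $\mathcal{R}$-linear isomorphism $\Phi\colon X/Y\to W'$ by $\overline{v}\mapsto v$ for $v\in V(\G')$, and prove that $\Phi$ intertwines $\rho$ with $R_{f'}\circ p$.

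Because $HK_\G$ is generated by $V(\G)$ and all of $\rho$, $p$, $R_{f'}$ are homomorphisms, it suffices to verify $\Phi\circ\rho([x])=R_{f'}(p([x]))\circ\Phi$ for every generator $x\in V(\G)$, tested on each basis vector $\overline{v}$, $v\in V(\G')$. I would split the verification according to whether $x$ belongs to $\G'$. If $x\notin V(\G')$, then $p([x])=\varepsilon$ so the right-hand side is the identity, while $x\neq v$ forces $\theta_x(v)=v$, giving $\rho([x])\overline{v}=\overline{v}$; both sides return $v$. If $x=v_0\in V(\G')$ and $v\neq v_0$, then $\theta_{v_0}$ and $\theta_{v_0}^{f'}$ both fix $v$, and the two sides again agree.

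The only genuine computation is the diagonal case $x=v=v_0$. Here $\theta_{v_0}(v_0)=\sum_{z\to v_0}f_{zv_0}z$, and every in-neighbour $z$ of $v_0$ lies in $V(\G'')$ because the edge $z\to v_0$ is itself a path into $\G'$; by the partition $V(\G'')=V(\G')\sqcup V(\G''')$ each such $z$ lies in $\G'$ or in $\G'''$. Reducing modulo $Y$ annihilates exactly the terms with $z\in\G'''$, so $\rho([v_0])\overline{v_0}=\sum_{z\to v_0,\,z\in V(\G')}f_{zv_0}\,\overline{z}$ and $\Phi$ sends this to $\sum_{z\to v_0,\,z\in V(\G')}f_{zv_0}z$. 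On the other side, since $\G'$ is a full subgraph and $f'$ is the restriction of $f$, we have $R_{f'}(p([v_0]))(v_0)=\theta_{v_0}^{f'}(v_0)=\sum_{z\to v_0\text{ in }\G'}f_{zv_0}z$, which is the identical vector. Hence $\Phi$ is an isomorphism of representations.

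I expect the main point to be purely bookkeeping: identifying the correct basis of $X/Y$ and observing that reduction modulo $Y$ discards exactly the basis vectors outside $\G'$, which is precisely the effect of $p$ on generators. Path completeness of $\G'$ does not enter the generator-wise check directly; as noted in the setup it is what guarantees that $Y$ is $HK_\G$-invariant and hence that $\rho$ is well defined on $X/Y$. Beyond this identification the argument is a routine case-by-case verification on generators.
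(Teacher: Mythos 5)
Your proof is correct and takes essentially the same approach as the paper: both define the same linear map $\Phi(v+Y)=v$ for $v\in V(\G')$ and observe it is an isomorphism of representations. The paper compresses the intertwining check into ``follows directly from the definitions,'' whereas you carry it out generator by generator, correctly identifying the one nontrivial case ($x=v=v_0$) where reduction modulo $Y$ discards exactly the in-neighbours lying in $\G'''$, matching the effect of $p$ and the restriction $f'$.
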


\begin{proof}
Define the map $\Phi:X/Y\to \bigoplus_{v\in\G'}\mathcal{R}v$ as the unique $\mathcal{R}$-linear map which
sends $v+Y$ for $v\in V(\G')$ to $v$. This is obviously linear and bijective. The fact that it is
a homomorphism of $HK_\G$-modules follows directly from the definitions.
\end{proof}

This proposition says that, for every word $\w\in(V(\G))^*$, the minor of the matrix of $R_f([\w])$ 
corresponding to the basis vectors $\{v:v\in\G'\}$ coincides with the matrix
$R_{f'}(p[\w])$. This fails if $\G'$ is not path complete. For example, if we let 
$\G = \xymatrix{a\ar[r] & b\ar[r] & c}$ and $\G'$ be the full subgraph with vertices $a$ and $c$,
then for $\w =bc$ and $f\equiv 1$ we have  $R_1([bc])(c)=a$ while $R_{1'}(p[bc])(c)=0$.
	
\begin{prop}\label{thm:p-inj}
Assume that $\G$ is a union of two full subgraphs, $\G_1$ and $\G_2$, 
such that $V(\G_1)\cap V(\G_2)=\{a\}$ for some $a\in\siso_\G$. Then the map
$p=(p_1,p_2):HK_\G\to HK_{\G_1}\times HK_{\G_2}$, where 
$p_i:HK_\G\to HK_{\G_i}$, \mbox{$i=1,2$}, are projection morphisms, is injective.  
\end{prop}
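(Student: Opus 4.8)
The plan is to show that if $p_1([\w])=p_1([\w'])$ and $p_2([\w])=p_2([\w'])$ then $[\w]=[\w']$, by reconstructing $[\w]$ from its two projections. The key structural feature is that $a$ is a source or sink of $\G$ separating $\G_1$ from $\G_2$: every edge of $\G$ lies entirely inside $\G_1$ or entirely inside $\G_2$, and the only shared vertex is $a$. Since $a\in\siso_\G$, Lemma~\ref{lemma:siso} (applied with $A=\{a\}$) lets me choose representatives that are multiplicity free with respect to $a$; so I may assume $\w$ and $\w'$ each contain the letter $a$ at most once, and the structure theory of $\approx$ on $\mathcal{MF}_{\{a\}}$ will be my main tool for comparing words. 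The idea is that deleting all letters of $\G_2\setminus\{a\}$ from a word recovers (a representative of) $p_1([\w])$ and deleting all letters of $\G_1\setminus\{a\}$ recovers $p_2([\w])$, so the two projections record respectively the $\G_1$-part and the $\G_2$-part of $\w$, and the single occurrence of $a$ is the hinge gluing them.

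First I would reduce to the multiplicity-free-in-$a$ case using Lemma~\ref{lemma:siso}, and split according to whether $a$ occurs in $\w$. If $a\notin\mathfrak{c}([\w])$, then no generator of $\G_1$ commutes-interacts with any generator of $\G_2$ through $a$, so letters from $V(\G_1)\setminus\{a\}$ and $V(\G_2)\setminus\{a\}$ are pairwise non-adjacent and hence commute by relation~(II); thus $[\w]$ factors as a commuting product $[\u][\v]$ with $\u$ a word in $\G_1\setminus\{a\}$ and $\v$ a word in $\G_2\setminus\{a\}$, and these are exactly $p_1([\w])$ and $p_2([\w])$. Equality of both projections then forces $[\u]=[\u']$ and $[\v]=[\v']$, whence $[\w]=[\w']$. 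If $a$ occurs exactly once, I would write $\w$ in the form $\w_L\,a\,\w_R$; because $a$ is (say) a source, letters of $\G_2\setminus\{a\}$ appearing in $\w_L$ can be slid to the left past $a$ and past all $\G_1$-letters (using (II) between the two sides and the source relations $ax=axa$, $ax=xa$ from the case analysis in Lemma~\ref{lemma:siso}), and symmetrically on the right, so that $[\w]$ can be put in a normal form where the $\G_2$-letters are segregated from the $\G_1$-letters around the unique $a$. The two projections then determine the $\G_1$-side word and the $\G_2$-side word separately, again yielding $[\w]=[\w']$.

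The main obstacle I expect is making the segregation/normal-form step rigorous rather than hand-wavy: I must verify that the commutations and source/sink relations genuinely allow every $\G_2$-letter to be moved to one designated side of the hinge $a$ \emph{without} being obstructed, and that this rewriting is compatible with $\sim$ on both sides simultaneously. The cleanest way to control this is to run the argument through the representations: by Proposition~\ref{thm:path}, the matrix of $R_f([\w])$ restricted to the basis $\{v:v\in\G_i\}$ equals $R_{f_i}\circ p_i([\w])$, since each $\G_i$ is path complete in $\G$ (any oriented path between $\G_i$-vertices must pass through the cut vertex $a$ and therefore stays in $\G_i$). Choosing an effective $f$ — which exists on each linearly ordered piece by Lemma~\ref{lem} — the pair of projections $(p_1([\w]),p_2([\w]))$ then determines the full action of $R_f([\w])$ on $W=\bigoplus_{v\in\G}\mathcal{R}v$, because every vertex lies in $\G_1$ or $\G_2$. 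Hence if $R_f$ is effective on $HK_\G$, injectivity of $p$ is immediate; and even when effectiveness of $R_f$ on all of $HK_\G$ is not yet available, the combinatorial normal-form argument above supplies the missing step, with Proposition~\ref{thm:path} serving as the bookkeeping that guarantees the two projected matrices reassemble correctly through the shared vertex $a$.
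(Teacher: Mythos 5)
Your combinatorial setup coincides with the paper's and is sound as far as it goes: by Lemma~\ref{lemma:siso} choose representatives that are multiplicity free with respect to $a$, and, since no vertex of $V(\G_1)\setminus\{a\}$ is adjacent to any vertex of $V(\G_2)\setminus\{a\}$, commutativity lets you write every class as $[\x_1\y_1\alpha\x_2\y_2]$ with $\x_1,\x_2$ words over $V(\G_1)\setminus\{a\}$, $\y_1,\y_2$ words over $V(\G_2)\setminus\{a\}$ and $\alpha\in\{\varepsilon,a\}$, so that $p([\w])=([\x_1\alpha\x_2],[\y_1\alpha\y_2])$. The genuine gap is exactly the step you flag and then skip. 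Equality of projections gives only the equivalences $\tilde\x_1\tilde\alpha\tilde\x_2\sim\x_1\alpha\x_2$ and $\tilde\y_1\tilde\alpha\tilde\y_2\sim\y_1\alpha\y_2$ (whence $\tilde\alpha=\alpha$); it does \emph{not} "determine the $\G_1$-side and $\G_2$-side words separately", because the decomposition around $a$ is not unique: if $x\in\G_1$ is not adjacent to $a$ then $xa\sim ax$, so the pieces $\x_1,\x_2$ are not invariants of the class $[\x_1\alpha\x_2]$. The missing argument, which is the entire content of the paper's proof, is a substitution chain: starting from $\tilde\x_1\tilde\y_1\tilde\alpha\tilde\x_2\tilde\y_2$, commute $\tilde\x_2$ past $\tilde\y_2$ so that $\tilde\y_1\tilde\alpha\tilde\y_2$ becomes a literal subword and replace it by the equivalent $\y_1\alpha\y_2$; then commute $\y_1$ and $\y_2$ outward so that $\tilde\x_1\alpha\tilde\x_2$ becomes a literal subword and replace it by $\x_1\alpha\x_2$; the result is $\x_1\y_1\alpha\x_2\y_2$, proving $\tilde\w\sim\w$. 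This is short but not automatic, and your write-up never produces it. (Also, sliding $\G_2$-letters "past $a$" using $ax=axa$ is both unnecessary and unsafe, since it changes the multiplicity of $a$; only commutations between the two sides are needed.)

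The representation-theoretic fallback you propose cannot close this gap, because it is circular. Effectiveness of $R_f$ on $HK_\G$ is Theorem~\ref{thmmain}, whose proof invokes Proposition~\ref{thm:p-inj}; you may not use it here. Nor can you always "choose an effective $f$": the proposition is stated for arbitrary full subgraphs $\G_1,\G_2$ glued at a vertex of $\siso_\G$, not for linearly ordered ones, and for general $\G_i$ the paper has no effective representation available — indeed, producing effective representations by gluing is precisely what this proposition is later used for. Finally, even granting effective $R_{f_i}$ and the extra block analysis needed to see that the two $\G_i$-minors determine all of $R_f([\w])$ (one must check the mixed matrix entries vanish, which Proposition~\ref{thm:path} alone does not assert), what you would obtain is the implication $p([\w])=p([\w'])\Rightarrow R_f([\w])=R_f([\w'])$, which points in the wrong direction: without already knowing that $R_f$ is effective on $HK_\G$ it yields nothing about $[\w]=[\w']$. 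So the combinatorial substitution step is not optional bookkeeping; it is the proof.
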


\begin{proof}
Let $\G'$ denote the full subgraph of $\G$ with vertices $V(\G)\setminus\{a\}$ and define 
$\G'_1$ and $\G'_2$ similarly. Assume that $\w$ and $\w'$ are such that $p[\w]=p(\w')$. We need to show that this implies $[\w]=[\w']$, or, equivalently, $\w\sim\w'$.  Since $a\in\siso_\G$, by Lemma~\ref{lemma:siso} every element $[\w]\in HK_\G$ 
contains at least one word $\w'$ which is multiplicity free with respect to $a$, i.e. 
$\w'=\w_1\alpha\w_2$ where $\alpha\in\{\varepsilon, a\}$ and $\w_1,\w_2\in HK_{\G'}$. Each of $\w_1$ and $\w_2$ can be written on the form $\w_i=\x_i\y_i$ for some $\x\in HK_{\G_1'}$ and $\y\in HK_{\Gamma_2'}$.  We do the same with $\w'$ and denote the result with primes, that is $\w'=\w_1'\alpha\w_2'=\x_1'\y_1'\alpha\x_2'\y_2'$. First note that $a$ is in $\mathfrak{c}[\w]$ if, and only if, $a$ is in $\mathfrak{c}[\w']$.

We claim that $p^{-1}$ is given by $([\x_1\alpha\x_2,\y_1\alpha\y_2])\mapsto[\x_1\y_1\alpha\x_2\y_2]$. We need to prove two things; a) the image of $p$ consists of elements on that form, and b) the map $p^{-1}$ is well-defined. The entire codomain of $p$ consists of elements on the form $([\x],[\y])$, for some $\x\in HK_{\G_1}$ and $\y\in HK_{\G_2}$. We can factor $\x=\x_1\alpha_1\x_2$ and $\y=\y_1\alpha_2\y_2$ by similar arguments as for $\w$ above. When we restrict to the image, we impose the condition that $\alpha_1=\alpha_2=\alpha$. On the other hand, any element on the form $([\x_1\alpha\x_2,\y_1\alpha\y_2])$ is the image under $p$ of the element $\x_1\y_1\alpha\x_2\y_2$, so the image does indeed consist precisely of the elements $\set{([\x_1\alpha\x_2,\y_1\alpha\y_2])}$. To prove that $p^{-1}$ is well-defined, we divide into two cases, depending on if $\alpha=\varepsilon$ or $\alpha=a$. By assumption and construction we have the following relations, which we use without further comment:
\begin{align*}
\x_1\alpha\x_2&\sim\x_1'\alpha\x_2',\\
\y_1\alpha\y_2&\sim\y_1'\alpha\y_2'\text{ and}\\
\x\y&\sim\y\x\text{ for all decorations on $\x$ and $\y$}.
\end{align*}

If $\alpha=\varepsilon$, then $\x_1\y_1\x_2\y_2\sim\x_1\x_2\y_1\y_2$ and it suffices to show that $\x_1\x_2\y_1\y_2\sim\x_1'\x_2'\y_1'\y_2'$, but this follows in two steps from the relations above. 

Now assume that $\alpha=a$. Then
\begin{multline*}
\x_1'\y_1'\alpha\x_2'\y_2'\sim
\x_1'\y_1'\alpha\y_2'\x_2'\sim
\x_1'\y_1\alpha\y_2\x_2'\sim \\\sim
\y_1\x_1'\alpha\x_2'\y_2=\y_1\x_1'\alpha\x_2'\y_2\sim
\y_1\x_1\alpha\x_2\y_2\sim
\x_1\y_1\alpha\x_2\y_2. 
\end{multline*}
Hence $\w'\sim\w$ and the claim follows.
\end{proof}

\begin{thm}\label{thmmain}
Let $\G$, $\G_1$ and $\G_2$ be as in Proposition~\ref{thm:p-inj}. Let $f_1$ and $f_2$ be weight functions for
$\G_1$ and $\G_2$, respectively and $f$ be the extension of $\{f_1,f_2\}$ to $\G$. Then the representation
$R_f$ of $HK_{\G}$ is effective if, and only if, the representations $R_{f_1}$ of $HK_{\G_1}$ and $R_{f_2}$ 
of $HK_{\G_2}$ are effective.
\end{thm}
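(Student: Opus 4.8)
The plan is to prove the two implications separately. Throughout I would work with the decomposition $W=\bigoplus_{v\in\G}\mathcal{R}v$ adapted to the partition $V(\G)=V(\G_1)\sqcup\big(V(\G_2)\setminus\{a\}\big)$, and I would freely use that, since $\G_i$ is a full subgraph, the inclusion $HK_{\G_i}\hookrightarrow HK_\G$ is split by the canonical projection $p_i$ (Section~\ref{s2}) and hence injective.

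For the backward implication, assume $R_{f_1}$ and $R_{f_2}$ are effective and suppose $R_f([\w])=R_f([\tilde{\w}])$. First I would observe that both $\G_1$ and $\G_2$ are path complete in $\G$: since $a\in\siso_\G$, no oriented path can both enter and leave $\G_i$ through the single cut vertex $a$ (a source has no incoming edge, a sink no outgoing edge), so every oriented path starting and ending in $\G_i$ stays inside $\G_i$. This lets me apply Proposition~\ref{thm:path} with $\G'=\G_i$, identifying $R_{f_i}\circ p_i$ with the subquotient $\rho_i$ of $R_f$ acting on $X_i/Y_i$. As $X_i$ and $Y_i$ are invariant subspaces depending only on $\G$, the equality $R_f([\w])=R_f([\tilde{\w}])$ forces $\rho_i([\w])=\rho_i([\tilde{\w}])$, i.e. $R_{f_i}(p_i([\w]))=R_{f_i}(p_i([\tilde{\w}]))$. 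Effectiveness of $R_{f_i}$ then gives $p_i([\w])=p_i([\tilde{\w}])$ for $i=1,2$, hence $p([\w])=p([\tilde{\w}])$, and injectivity of $p$ (Proposition~\ref{thm:p-inj}) yields $[\w]=[\tilde{\w}]$.

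For the forward implication I would argue the contrapositive: if, say, $R_{f_1}$ is not effective, I produce distinct elements of $HK_\G$ with equal image under $R_f$. Choose $[\mathbf{u}]\neq[\mathbf{v}]$ in $HK_{\G_1}$ with $R_{f_1}([\mathbf{u}])=R_{f_1}([\mathbf{v}])$ and view them in $HK_\G$, where they stay distinct because $p_1$ retracts the inclusion. The key is a block description of $R_f([\mathbf{u}])$ for $\mathbf{u}\in HK_{\G_1}$ in the splitting above. Every generator $\theta_x$ with $x\in V(\G_1)$ fixes each basis vector of $\G_2\setminus\{a\}$, so the lower-right block is the identity and the upper-right block vanishes; moreover the induced action on $W/\mathrm{span}(\G_2\setminus\{a\})\cong\mathrm{span}(\G_1)$ is exactly $R_{f_1}([\mathbf{u}])$. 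Thus $R_f([\mathbf{u}])$ is lower block triangular with diagonal blocks $R_{f_1}([\mathbf{u}])$ and $I$, and the only piece not yet controlled is the coupling block $B_{\mathbf{u}}\colon\mathrm{span}(\G_1)\to\mathrm{span}(\G_2\setminus\{a\})$.

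The main obstacle is to show that $B_{\mathbf{u}}$ is itself determined by the matrix $R_{f_1}([\mathbf{u}])$, for then $R_{f_1}([\mathbf{u}])=R_{f_1}([\mathbf{v}])$ forces $R_f([\mathbf{u}])=R_f([\mathbf{v}])$, contradicting effectiveness of $R_f$. When $a$ is a source this is immediate: $\theta_a(a)=0$, so no $\G_2\setminus\{a\}$-component is ever created, $\mathrm{span}(\G_1)$ is invariant, $B_{\mathbf{u}}=0$, and $R_f([\mathbf{u}])=R_{f_1}([\mathbf{u}])\oplus I$. When $a$ is a sink the coupling may be nonzero, since $\theta_a(a)=\sum_{z\to a}f_{za}z$ contains the fixed vector $Q:=\sum_{z\to a,\,z\in\G_2\setminus\{a\}}f_{za}z$. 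Here I would track how $\G_2\setminus\{a\}$-components arise: each application of $\theta_a$ contributes (current $a$-coefficient)$\cdot Q$, and these components are never disturbed by later $\G_1$-letters, so $B_{\mathbf{u}}(v)=\Lambda_{\mathbf{u}}(v)\,Q$ with $\Lambda_{\mathbf{u}}(v)$ the total accumulated $a$-coefficient. A short cocycle computation, using that the $a$-row of $\theta_x^{f_1}$ equals $e_a^{*}$ for $x\neq a$ and vanishes for $x=a$ (here the sink hypothesis is essential: $a$ has no out-edge, so no $f_{ax}$ term contributes to the $a$-row), shows that the $a$-row of $R_{f_1}([\mathbf{u}])$ is zero exactly when $a\in\mathfrak{c}([\mathbf{u}])$ and equals $e_a^{*}$ otherwise, and that $\Lambda_{\mathbf{u}}=e_a^{*}$ if $a\in\mathfrak{c}([\mathbf{u}])$ and $\Lambda_{\mathbf{u}}=0$ if not. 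Hence $B_{\mathbf{u}}$ depends only on whether $a\in\mathfrak{c}([\mathbf{u}])$, a datum read off from $R_{f_1}([\mathbf{u}])$, which closes the argument. Interchanging the roles of $\G_1$ and $\G_2$ handles $R_{f_2}$.
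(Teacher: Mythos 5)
Your proof is correct, and its overall strategy matches the paper's; the difference lies in how the ``only if'' direction is carried out. Your backward implication is exactly the paper's argument: path completeness of $\G_1$ and $\G_2$, Proposition~\ref{thm:path} to realize $R_{f_i}\circ p_i$ as a subquotient of $R_f$, effectiveness of the $R_{f_i}$, and injectivity of $p=(p_1,p_2)$ from Proposition~\ref{thm:p-inj}. For the forward implication the paper also proceeds by a block description of $R_f[\w]$ in terms of $R_{f_1}p_1[\w]$ and $R_{f_2}p_2[\w]$, but its writeup is defective: the displayed computation is left unfinished in the source, the ``otherwise'' case of its formula should give $a$ rather than $0$, and -- most importantly -- it asserts without justification that witnesses $[\w_1]\neq[\w_2]$ of non-effectiveness of $R_{f_1}$ satisfy $p_2[\w_1]=p_2[\w_2]$, i.e.\ that $a\in\mathfrak{c}[\w_1]$ if and only if $a\in\mathfrak{c}[\w_2]$. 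Your argument supplies precisely this missing ingredient: when $a$ is a sink, the identity $e_a^{*}\circ\theta_x^{f_1}=e_a^{*}$ for $x\neq a$ and $e_a^{*}\circ\theta_a^{f_1}=0$ shows that the $a$-row of $R_{f_1}([\mathbf{u}])$ vanishes exactly when $a\in\mathfrak{c}([\mathbf{u}])$, so membership of $a$ in the content -- and with it the coupling block $B_{\mathbf{u}}=\Lambda_{\mathbf{u}}\,Q$ -- is read off from the matrix $R_{f_1}([\mathbf{u}])$; when $a$ is a source the coupling vanishes outright and no content detection is needed. Thus your lower-block-triangular analysis, restricted to elements coming from $HK_{\G_1}$ (which suffices for the contrapositive, since $p_1$ splits the inclusion $HK_{\G_1}\hookrightarrow HK_\G$), yields a self-contained proof of the direction that the paper only sketches, at the modest cost of a case split on whether $a$ is a source or a sink.
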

		
\begin{proof}
Let $\G'$, $\G'_1$ and $\G'_2$ be as in the proof of Proposition~\ref{thm:p-inj}. Recall that the construction is such that any edge of $\Gamma$ is either in $\Gamma_1$ or $\Gamma_2$.
We start with the ``only if'' part. Both $\G_1$ and $\G_2$ are path complete, which implies that
\[R_f[\w](v)=\begin{cases}
R_{f_i}(p_i[\w])(v), &\text{ if }v\in\G_i';\\
R_{f_1}(p_1[\w])(a)+R_{f_2}(p_2[\w])(a), &\text{ if }v=a\text{ and }a\in\mathfrak{c}[\w];\\
0 & \text{otherwise}
\end{cases}\]
Now assume that $R_{f_1}$ is not effective, i.e. there exists $[\w_1]\neq[\w_2]\in HK_{\G_2}$ such that $R_{f_1}([\w_1])=R_{f_1}([\w_2])$. Then $p_2[\w_1]=p_2[\w_2](=[a]\text{ or }[\varepsilon])$, and
\[R_f([\w_1])(v)=\begin{cases}
R_{f_1}(p_1[\w_1])(v), &\text{ if }v\in\G_1';\\
R_{f_2}(p_2[\w_1])(v), &\text{ if }v\in\G_2';\\
R_{f_1}(p_1[\w_1])(a)+R_{f_2}(p_2[\w_1])(a), &\text{ if }v=a\text{ and }a\in\mathfrak{c}[\w].\\
0 & \text{otherwise}
\end{cases}\]

The ``if'' part follows by combining Propositions~\ref{thm:path} and \ref{thm:p-inj}
(note again that both $\G_1$ and $\G_2$ are path connected in $\G$). 
Assume that the representations $R_{f_1}$ of $HK_{\G_1}$ and $R_{f_2}$ of $HK_{\G_2}$ are effective
and consider the representation $R_f$ of $HK_{\G}$. Let $[\w]\neq[\w']$ be two elements of
$HK_{\G}$. Assume $R_f([\w])=R_f([\w'])$. Then Propositions~\ref{thm:path} and the arguments from the
first part of the proof imply that $R_{f_1}(p_1([\w]))=R_{f_1}(p_1([\w']))$ and
$R_{f_2}(p_2([\w]))=R_{f_2}(p_2([\w']))$. Since both $R_{f_1}$ and $R_{f_2}$ are effective,
we get $p_1([\w])=p_1([\w'])$ and $p_2([\w])=p_2([\w'])$. Now from Proposition~\ref{thm:p-inj}
we get $[\w]=[\w']$. The claim follows.
\end{proof}

This statement can now be iterated as follows. Assume that $\G$ is a union of full
subgraphs, $\displaystyle \G=\bigcup_{i=1}^n \G_i$, where $n>1$, such that each pair of different subgraphs
does not have any common edges and, moreover, we assume that for every $k=2,3,\dots,n$ there
is $a_{k}\in\siso_\G$ such that 
\[V(\G_k)\cap\big(\bigcup_{i=1}^{k-1}V(\G_i)\big)=\{a_{k}\}.\]
In this case we say that $\G$ satisfies the \emph{gluing condition}.

For example, let $\G$ be of type $A_n$. If we define $\G_i$ to be the maximal connected linearly ordered
full subgraphs of $\G$, then $\G$ satisfies the gluing condition with respect to these subgraphs as
illustrated below (here $\Gamma_i$ is a subgraph with vertices between $a_i$ and $a_{i+1}$,
where $a_0$ is the leftmost vertex and $a_{k+1}$ is the rightmost vertex):

\begin{equation}\label{eq21}{\tiny
\xymatrix{a_0\ar[r]&\cdots\ar[r]&a_1&\cdots\ar[l]&a_3
\ar[l]\ar[r]&\cdots \cdots\ar[r]& a_k &\cdots\ar[l] & a_{k+1} \ar[l]\\
&\G_1 & & \G_2 & & \cdots & &\G_{k+1}} 
}\end{equation}

This implies the following corollary which answers \cite[Question~8]{GM11}.

\begin{cor}
Let $\G$ be of type $A_n$. Then the representation $R_1$ of $HK_{\G}$ is effective.
\end{cor}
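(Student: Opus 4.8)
The plan is to deduce the corollary from the iterated version of Theorem~\ref{thmmain}, using as base case the effectiveness of $R_f$ on linearly ordered pieces. First I would decompose $\G$ into its maximal connected linearly ordered full subgraphs $\G_1,\dots,\G_k$, glued at the sources and sinks $a_2,\dots,a_k$ exactly as depicted in \eqref{eq21}. As noted there, this decomposition realizes the gluing condition: each $\G_i$ is a full subgraph, distinct $\G_i$ and $\G_j$ share no edges, and for every $k$ one has $V(\G_k)\cap\bigl(\bigcup_{i=1}^{k-1}V(\G_i)\bigr)=\{a_k\}$ with $a_k\in\siso_\G$. Moreover each $\G_i$ is linearly ordered of type $A_{n_i}$, so by the Lemma asserting that \emph{any} weight function gives an effective representation on a linearly ordered graph of type $A_n$, the representation $R_1$ of $HK_{\G_i}$ is effective for every $i$.

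Next I would observe that the constant weight function $1$ on $\G$ restricts on each $\G_i$ to the constant function $1$, and that $1$ is precisely the extension of these restrictions in the sense defined before Theorem~\ref{thmmain}. This ensures that the weight-function hypotheses needed to apply that theorem are met at each stage, with $f=1$ throughout.

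I would then induct on the number $k$ of pieces. The base case $k=1$ is the Lemma itself. For the inductive step set $\Delta_{k-1}=\bigcup_{i=1}^{k-1}\G_i$; since this is a contiguous initial segment of the underlying path, it is a full (indeed linearly segmented) subgraph of $\G$, it shares no edge with $\G_k$, and $V(\Delta_{k-1})\cap V(\G_k)=\{a_k\}$. Because $a_k\in\siso_\G$, and source/sink status is inherited by any full subgraph containing the vertex (passing to a full subgraph only deletes incident edges, never reverses or adds them), we have $a_k\in\siso_{\Delta_{k-1}\cup\G_k}$. Thus the pair $(\Delta_{k-1},\G_k)$ satisfies the hypotheses of Theorem~\ref{thmmain}. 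By the inductive hypothesis $R_1$ is effective on $HK_{\Delta_{k-1}}$, and by the Lemma $R_1$ is effective on $HK_{\G_k}$; hence Theorem~\ref{thmmain} yields that $R_1$ is effective on $HK_{\Delta_{k-1}\cup\G_k}=HK_{\bigcup_{i=1}^{k}\G_i}$. Taking $k$ equal to the total number of pieces gives effectiveness of $R_1$ on $HK_\G$.

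The main obstacle is not any computation but the bookkeeping that legitimizes iterating the two-piece gluing theorem: at each stage one must confirm that $\Delta_{k-1}$ is genuinely a full subgraph, that it shares no edge with $\G_k$, and—most delicately—that the single common vertex $a_k$ is a source or sink of the \emph{intermediate} graph $\Delta_{k-1}\cup\G_k$ to which Theorem~\ref{thmmain} is applied, not merely of the ambient $\G$. All three follow from the path structure of a type $A_n$ graph and the inheritance of source/sink status under passage to full subgraphs, but these points must be stated explicitly so that the hypotheses of Theorem~\ref{thmmain} are verified at every step of the induction.
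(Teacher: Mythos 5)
Your proposal is correct and follows essentially the same route as the paper: decompose $\G$ into its maximal linearly ordered pieces as in \eqref{eq21}, invoke effectiveness of $R_1$ on each linearly ordered piece (the paper cites \cite[Subsection~3.2]{GM11} directly, which underlies the Lemma you quote), and iterate Theorem~\ref{thmmain} by induction on the number of pieces. Your explicit verification that the gluing vertex $a_k$ remains a source or sink in the intermediate graph $\Delta_{k-1}\cup\G_k$ is exactly the bookkeeping the paper leaves implicit, so the two arguments coincide in substance.
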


\begin{proof}
This follows from Theorem~\ref{thmmain} and  \cite[Subsection~3.2]{GM11} by induction on the number of internal sinks and sources.
\end{proof}

As a byproduct of the proof of Proposition~\ref{thm:p-inj} we get formulae for certain cardinalities.
An element $[\w]$ is said to have \emph{maximal content} if $\mathfrak{c}[\w]=V(\G)$. 
The subsemigroup consisting of elements with maximal content is denoted by $\m(\G)$. 

\begin{thm}\label{thmproduct}
Assume that $\displaystyle \G=\bigcup_{i=1}^k \G_i$ satisfies the gluing condition. 
Then \[|\m(\G)|=\prod_{i=1}^k |\m(\G_i)|.\] 
\end{thm}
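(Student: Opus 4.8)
The plan is to induct on $k$. The base case $k=1$ is trivial, since then $\G=\G_1$ and $\m(\G)=\m(\G_1)$. For the inductive step I would peel off the last piece: set $\G'$ to be the full subgraph of $\G$ on $\bigcup_{i=1}^{k-1}V(\G_i)$. First I would check that $\G'$, together with $\G_1,\dots,\G_{k-1}$, again satisfies the gluing condition, so that the inductive hypothesis applies to it. Indeed, any edge of $\G$ joining two vertices of $\G'$ lies in some $\G_i$ (because $\G=\bigcup_i\G_i$ as graphs) and cannot lie in $\G_k$, since its two distinct endpoints would both have to equal $a_k$; hence every edge of $\G'$ comes from the first $k-1$ pieces, each $\G_i$ with $i<k$ is full in $\G'$, and each $a_i$ with $i<k$, being a source or sink of $\G$, remains one in the full subgraph $\G'$. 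Moreover the gluing condition for $\G$ gives $V(\G')\cap V(\G_k)=\{a_k\}$ with $a_k\in\siso_\G$, so $\G=\G'\cup\G_k$ is exactly the situation of Proposition~\ref{thm:p-inj}.

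The heart of the argument is the claim that the injective map $p=(p',p_k)\colon HK_\G\to HK_{\G'}\times HK_{\G_k}$ of Proposition~\ref{thm:p-inj} (where $p'$ and $p_k$ are the projections onto $HK_{\G'}$ and $HK_{\G_k}$) restricts to a \emph{bijection} $\m(\G)\to\m(\G')\times\m(\G_k)$. Injectivity is inherited directly from Proposition~\ref{thm:p-inj}. That the image lands inside $\m(\G')\times\m(\G_k)$ follows from the behaviour of content under a canonical projection: since $p'$ deletes exactly the letters in $V(\G_k)\setminus\{a_k\}$, one has $\mathfrak{c}(p'[\w])=\mathfrak{c}[\w]\cap V(\G')$, and similarly for $p_k$; thus $\mathfrak{c}[\w]=V(\G)$ forces $\mathfrak{c}(p'[\w])=V(\G')$ and $\mathfrak{c}(p_k[\w])=V(\G_k)$, so both coordinates have maximal content.

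For surjectivity I would reuse the explicit normal form produced in the proof of Proposition~\ref{thm:p-inj}. Given $([\mathbf{u}],[\mathbf{v}])\in\m(\G')\times\m(\G_k)$, the vertex $a_k$ lies in the content of both, so Lemma~\ref{lemma:siso} (with $A=\{a_k\}$, noting $a_k\in\siso_{\G'}$ and $a_k\in\siso_{\G_k}$) lets me pick representatives $\mathbf{u}_1a_k\mathbf{u}_2$ and $\mathbf{v}_1a_k\mathbf{v}_2$ that are multiplicity free with respect to $a_k$, where $\mathbf{u}_1,\mathbf{u}_2$ are words in $V(\G')\setminus\{a_k\}$ and $\mathbf{v}_1,\mathbf{v}_2$ are words in $V(\G_k)\setminus\{a_k\}$. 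Since these two vertex sets share neither vertices nor edges, their letters commute, and I would set $[\w]=[\mathbf{u}_1\mathbf{v}_1a_k\mathbf{u}_2\mathbf{v}_2]$. Applying $p'$ kills $\mathbf{v}_1,\mathbf{v}_2$ and returns $[\mathbf{u}]$, applying $p_k$ kills $\mathbf{u}_1,\mathbf{u}_2$ and returns $[\mathbf{v}]$, so $p([\w])=([\mathbf{u}],[\mathbf{v}])$; and $\mathfrak{c}[\w]=\mathfrak{c}[\mathbf{u}]\cup\mathfrak{c}[\mathbf{v}]=V(\G')\cup V(\G_k)=V(\G)$, so $[\w]\in\m(\G)$. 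This establishes the bijection, whence $|\m(\G)|=|\m(\G')|\cdot|\m(\G_k)|$, and the inductive hypothesis applied to $\G'$ then yields $|\m(\G)|=\prod_{i=1}^{k}|\m(\G_i)|$.

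I expect the main obstacle to be the surjectivity step, and specifically the bookkeeping showing that the single concatenated word $\mathbf{u}_1\mathbf{v}_1a_k\mathbf{u}_2\mathbf{v}_2$ simultaneously projects correctly under both $p'$ and $p_k$ and has full content; the commutation of the two disjoint blocks of letters and the maximal-content hypotheses on $[\mathbf{u}]$ and $[\mathbf{v}]$ are precisely what make this work and must be tracked with care. A secondary point requiring attention is the verification, carried out in the first paragraph, that removing $\G_k$ leaves a genuinely smaller instance of the gluing condition, as this is what licenses the induction.
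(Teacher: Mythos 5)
Your proof is correct and follows essentially the same route as the paper: reduce by induction to the two-piece case, then show that the map $p$ of Proposition~\ref{thm:p-inj} restricts to a bijection $\m(\G)\to\m(\G')\times\m(\G_k)$, with injectivity inherited from that proposition and surjectivity witnessed by the same interleaved word $\mathbf{u}_1\mathbf{v}_1a_k\mathbf{u}_2\mathbf{v}_2$ built from representatives that are multiplicity free with respect to $a_k$. The only difference is that you make explicit two points the paper leaves implicit, namely that the gluing condition persists after peeling off $\G_k$ and that $p$ indeed maps $\m(\G)$ into $\m(\G')\times\m(\G_k)$.
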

	
\begin{proof}
By induction, it suffices to prove the claim for $\G=\G_1\cup\G_2$. All we need to show is that the restricted 
map $p:\m(\G)\to \m(\G_1)\times\m(\G_2)$ is a bijection. Since we know that it is injective
(by Proposition~\ref{thm:p-inj}), we only have to establish its surjectivity. 
Let $([\x],[\y])\in \m(\G_1)\times\m(\G_2)$. By definition, $a\in\mathfrak{c}[\x]$ and
$a\in\mathfrak{c}[\y]$, so there are words $\x\sim\x_1a\x_2,\y\sim\y_1a\y_2$ which are multiplicity free 
with respect to $a$. Then  $[\x_1\y_1a\x_2\y_2]$ is the  preimage of $([\x],[\y])$. The claim follows.
\end{proof}
	
The Catalan numbers $C_n=\frac{1}{n+1}{\binom{2n}{n}}$ are the cardinalities of the HK-monoids of linearly 
ordered graphs, see \cite[Theorem~1(vi)]{GM11}. They can also be used to calculate the cardinality of any 
HK-monoid of type $A_n$. 
	
Let $\displaystyle\G=\bigcup_{i=1}^{k+1}\G_i$ be a graph of type $A_n$ as in \eqref{eq21} or its opposite. Specifically, it has $k+2$ sinks and sources, out of which $k$ are gluing linearly ordered pieces together. Let $[\w]$ be an element in $HK_\G$. Define its \emph{signature} as $\mathfrak{s}[\w]:=\mathfrak{c}[\w]\cap\mathcal{S}_\Gamma$. Similarly we define the local signature as $\mathfrak{s}_i[\w]:=\mathfrak{c}[\w]\cap\mathcal{S}_{\Gamma_i}$ for all $1\leq i\leq k+1$. Clearly, the equality $\mathfrak{s}_i[\w]=\mathfrak{s}[\w]\cap\set{i-1,i}$ holds for all $1\leq i\leq k+1$. In the calculations ahead, we use the set $Q$ to keep track of signatures and the functions $\delta_i$ to calculate local signatures from a signature. The set $X_i(Q)$ will be the set of all elements in $HK_{\G_i}$ with local signature determined by $Q$. The functions $c_i$ will count the cardinalities of the sets $X_i$. 
 
For a subset $Q\subset\{1,2,3,\cdots,k\}$ and $i\in\{1,2,3,\cdots,k+1\}$ set
\[\delta_i(Q)=\begin{cases}
		1, & \text{ if }i\in Q;\\
		0, & \text{otherwise};
\end{cases}\]
Let $l_i$  be the length of the linearly ordered piece between sources and sinks labeled $i$ and $i+1$, in terms of number of vertices. Alternatively $l_i$ is one more than the number of arrows in that piece. Let the functions $c_i$ be defined as follows:
\[c_i(Q)=\begin{cases}
		C_{l_i-1}, & \delta_{i-1}(Q)=\delta_i(Q)=0; \\
		C_{l_i}-C_{l_i-1}, &  \delta_{i-1}(Q)+\delta_i(Q)=1; \\
		C_{l_i+1}-2C_{l_i}+C_{l_i-1}, & \delta_{i-1}(Q)=\delta_i(Q)=1, 
\end{cases}\]
for $2\leq i\leq k$, and
\[c_1(Q)=\begin{cases}
	C_{l_1}, & \delta_1(Q)=0;\\
	C_{l_1+1}-C_{l_1}, & \delta_1(Q)=1;
\end{cases}\quad
	c_{k+1}(Q)=\begin{cases}
	C_{l_{k+1}}, & \delta_{k}(Q)=0;\\
	C_{l_{k+1}+1}-C_{l_{k+1}}, & \delta_{k}(Q)=1.
\end{cases}\]
	
\begin{cor}
Let $\G$ be as above. Then we have
\begin{enumerate}[(i)]
\item $\displaystyle |\m(\G)|=\prod_{i=1}^{k+1}C_{l_i}$,
\item $\displaystyle |HK_\G|=\sum_{Q\subset \underline{k+1}}\prod_{i=1}^{k+1}c_i(Q)$,
\item if $\G=\mathcal{S}_{\G}$, that is,
\[\G = 1\rightarrow 2\leftarrow 3\rightarrow 4\leftarrow \cdots n \quad\text{ or }
\quad\G=1\leftarrow 2\rightarrow 3\leftarrow 4 \rightarrow \cdots n,\]
then $|HK_\G|=F_{2n+1}$ is the $(2n+1)$-th Fibonacci number (where $F_1=F_2=1)$.
\end{enumerate}
\end{cor}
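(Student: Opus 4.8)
The plan is to prove the three cardinality formulas in order, using Theorem~\ref{thmproduct} as the main engine and reducing everything to counting problems for single linearly ordered pieces. For part~(i), the graph $\G$ satisfies the gluing condition, so Theorem~\ref{thmproduct} gives $|\m(\G)|=\prod_{i=1}^k|\m(\G_i)|$ immediately. Since each $\G_i$ is linearly ordered of type $A_{l_i}$, we have $|\m(\G_i)|=|HK_{\G_i}|=C_{l_i}$ by \cite[Theorem~1(vi)]{GM11}, because every element of a linearly ordered HK-monoid automatically has maximal content (a subtlety I would double-check: in a linearly ordered graph the only idempotent of less-than-full content... actually one must verify $\m(\G_i)=HK_{\G_i}$, which holds since the zero element of $HK_{\G_i}$ has full content and every element of $HK_{\G_i}$ is $\geq$ it; more carefully, $|\m(\G_i)|$ counts full-content elements, and the Catalan count in \cite{GM11} is for all of $HK_{\G_i}$, so I need $C_{l_i}$ to be the full-content count). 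This gives (i).

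For part~(ii), the idea is to stratify $HK_\G$ by content. Any element $[\w]$ has content $\mathfrak{c}[\w]=V(\G')$ for a unique full subgraph $\G'$, and by the semilattice decomposition described in Section~\ref{s2} the elements of a fixed content form a subsemigroup isomorphic to $\m(\G')$. So $|HK_\G|=\sum_{\G'}|\m(\G')|$ where $\G'$ ranges over full subgraphs of type $A$ that arise as contents. The combinatorial heart is to organize these subgraphs by which of the gluing vertices $a_2,\dots,a_k$ are \emph{omitted}: a subset $Q\subset\{2,\dots,k\}$ records the removed sinks/sources. Removing such a vertex splits the corresponding piece, and the coefficients $c_i(Q)$ are precisely the full-content counts of the (possibly truncated) $i$-th piece, expressed via inclusion-exclusion on whether each boundary vertex $a_{i-1},a_i$ is present. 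The three cases in the definition of $c_i(Q)$ correspond to zero, one, or two boundary vertices being deleted, and the differences $C_{l_i}-C_{l_i-1}$, etc., count full-content elements of a linearly ordered graph with one or two endpoints removed. Applying part~(i) piecewise to each $Q$ and summing gives (ii); the main work is verifying that the $c_i(Q)$ are the correct local counts, i.e.\ that deleting a boundary vertex reduces the relevant Catalan count by exactly the claimed second difference.

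For part~(iii), when $\G=\siso_\G$ every piece $\G_i$ is a single edge, so $l_i=1$ for all interior pieces and $l_1=l_k=1$ as well (here $n$ counts the vertices). Then $C_{l_i}=C_1=1$, $C_{l_i-1}=C_0=1$, and the formula from (ii) collapses: each $c_i(Q)$ becomes one of $C_0=1$, $C_1-C_0=0$, or $C_2-2C_1+C_0=2-2+1=1$ in the interior, and $C_1=1$ or $C_2-C_1=1$ at the ends. The upshot is that $c_i(Q)$ is $0$ or $1$ according to a local compatibility rule on $\delta_{i-1}(Q)$ and $\delta_i(Q)$, so $\prod_i c_i(Q)\in\{0,1\}$ and $|HK_\G|$ counts the subsets $Q$ all of whose adjacent-pair constraints are satisfied. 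The plan is to show this count obeys a Fibonacci recurrence: conditioning on whether the last boundary vertex lies in $Q$ produces a transfer-matrix recursion $T_k=T_{k-1}+T_{k-2}$, and matching the base cases identifies the total with $F_{2n+1}$.

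The hard part will be part~(ii): pinning down exactly why $c_i(Q)$ equals the number of full-content elements of the $i$-th piece after the indicated boundary deletions, and in particular justifying the second-difference formula $C_{l_i+1}-2C_{l_i}+C_{l_i-1}$ when both neighbors are removed. This requires understanding how deleting a source/sink at the boundary of a linearly ordered $A_{l}$ graph changes the full-content count, which I expect to follow from a direct analysis of which reduced words of $HK_{A_l}$ omit a given endpoint, combined with the Catalan bijection of \cite{GM11}. Once that local computation is in hand, parts~(i) and~(iii) are essentially bookkeeping.
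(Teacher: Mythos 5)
Your overall skeleton for (i) and (ii) — Theorem~\ref{thmproduct} plus inclusion--exclusion over which gluing vertices lie in the content — is essentially the paper's approach, but part (i) as written has a genuine gap. The claim that every element of a linearly ordered HK-monoid has maximal content is false ($\varepsilon$ and each single generator are counterexamples), so $\m(\G_i)\neq HK_{\G_i}$; in fact $|HK_{\G_i}|=C_{l_i+1}$ while the corollary needs $|\m(\G_i)|=C_{l_i}$, one Catalan number lower, and your proposal flags this but never establishes it. The paper proves it via \cite[Theorem~1(vii)]{GM11}: under the bijection of $HK_{\G_i}$ with order-preserving, order-decreasing maps on a chain with $l_i+1$ points, the maximal-content elements correspond to maps $\tau$ with $\tau(j)<j$ for all $j\neq 1$, and re-indexing the domain identifies these with all order-preserving, order-decreasing maps on a chain with $l_i$ points, whence $C_{l_i}$. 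Without this local count, (i) is unproven and the coefficients in (ii) are unjustified; your description of the $c_i(Q)$ is also off — for instance $C_{l_i-1}$ is the cardinality of the whole monoid $HK_{\G_i\setminus\{a_i,a_{i+1}\}}$, i.e.\ the count of all elements of $HK_{\G_i}$ omitting both boundary vertices, not a full-content count of a truncated piece.

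Part (iii) fails outright. The parameter $l_i$ is the number of vertices of $\G_i$ (forced by the paper's use of a chain with $l_i+1$ points and by the remark that $l_i\geq 2$), so in the alternating graph each piece is a single edge with $l_i=2$, not $l_i=1$. The correct specialization of (ii) has interior coefficients $C_1=1$, $C_2-C_1=1$, $C_3-2C_2+C_1=2$ and end coefficients $C_2=2$, $C_3-C_2=3$; e.g.\ for $a\to b\leftarrow c$ this gives $2\cdot 2+3\cdot 3=13=F_7$. Your collapsed $0/1$ coefficients cannot work even in principle: a $0/1$-valued sum over subsets of $\{2,\dots,k\}$ with $k=n-1$ is at most $2^{n-2}<F_{2n+1}$, so no such count can equal $F_{2n+1}$. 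Moreover, the constraint you actually derived (coefficient $0$ exactly when one boundary vertex is in $Q$ and the other is not) forces all the $\delta_i(Q)$ to be equal, so only two subsets $Q$ survive — there is no recursion $T_k=T_{k-1}+T_{k-2}$ here. The paper avoids this entirely: it proves (iii) by establishing the recursion $f_{n+1}=3f_n-f_{n-1}$ directly, splitting $HK_{\mathcal{A}_{n+1}}$ according to whether $v_n$ lies in the content and using the gluing bijection of Proposition~\ref{thm:p-inj} ($2f_{n-1}$ elements without $v_n$ and $3(f_n-f_{n-1})$ with it), then matching initial values against $F_{2n+1}$. A transfer-matrix argument along your lines could be made to work with the correct coefficients $1,1,2$ and $2,3$, but that computation is not in your proposal.
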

	
\begin{proof}
To prove the first claim we need to know $\m(\G_i)$ and then apply Theorem~\ref{thmproduct}. 
Since $\G_i$ is linearly ordered, the elements of $HK_{\G_i}$ are in bijection with order 
preserving and order decreasing transformations on a set with $l_i+1$ elements, see \cite[Theorem~1(vii)]{GM11}. 
This bijection restricts to a bijection between $\m(\G_i)$ and transformations $\tau$ such that $\tau(j)<j$ 
for all $j\neq 1$. Changing $j$ in the domain to $j-1$ gives a bijection between $\m(\G_i)$ with order 
preserving and order decreasing transformations on a set with $l_i$ elements and hence $|\m(\G_i)|=C_{l_i}$
by \cite[Theorem~1(vi) and (vii)]{GM11}.

For the second claim we have to work more. Assume that $\G=\G_1\cup\G_2$ satisfies the gluing condition
with $a\in\siso_\G$ as the common vertex. Then $HK_\G$ splits into two subsets (in fact subsemigroups) \mbox{$A_\G:=\{[\w]\in HK_\G\mid a\in\mathfrak{c}[\w]\}$} and $B_\G:=\{[\w]\in HK_\G\mid a\not\in\mathfrak{c}[\w]\}$. Similarly, $HK_{\G_i}=A_{\G_i}\cup B_{\G_i}$ for $i=1,2$. The function $p$ in Proposition~\ref{thm:p-inj} restricts to bijections 
\[p:A_\G\to A_{\G_1}\times A_{\G_2}\quad  \text{ and }\quad p:B_\G\to B_{\G_1}\times B_{\G_2}.\]
By the multiplicative principle we have $|A_\G|=|A_{\G_1}|\cdot |A_{\G_2}|$ and 
\mbox{$|B_\G|=|B_{\G_1}|\cdot|B_{\G_2}|$}. 

For $Q\subset \{1,2,\cdots,k\}$ and $i=2,\dots,k+1$, define $a_Q:=\set{a_i\mid i\in Q}$ and 
\begin{displaymath}
X_i(Q):=\set{[\w]\in HK_{\G_i}\mid \mathfrak{s}_i[\w]=a_Q\cap\set{a_{i-1},a_i}} 
\end{displaymath}
and set
\begin{gather*}
X_1(Q):=\{[\w]\in HK_{\G_1}\mid \mathfrak{s}[\w]=a_Q\cap\{a_1\}\} \\
X_{k+1}(Q):=\{[\w]\in HK_{\G_{k+1}}\mid \mathfrak{c}[\w]=a_Q\cap\{a_k\}\}.
\end{gather*}
Since every element has exactly one signature, we can sum over all possible signatures. For a fixed signature the multiplicativity which holds for two pieces translates to multiplicativity over all pieces. Put together we get the following formula:
\begin{equation}\label{eq323}
|HK_\G|=\sum_{Q\subset\underline{k}\setminus\{1\}}\prod_{i=1}^k|X_i(Q)| 
\end{equation}
To compute cardinalities of $X_i(Q)$ (and show that $c_i(Q)=|X_i(Q)|$, as claimed) we have to consider several cases. We start with the case
$i\neq1,k+1$.
		
\begin{enumerate}
\item Assume that  $i\not\in Q$ and $i+1\not\in Q$. If $a_i,a_{i+1}\not\in\mathfrak{c}[\w_i]$, then $[\w_i]$ can be thought of as living in the smaller HK-monoid $HK_{\G_i\setminus\{a_i,a_{i+1}\}}$, and vice versa. Since $\G_i$ was assumed to be linearly ordered, it will be the case for $\G_i\setminus\{a_i,a_{i+1}\}$ as well. Note that all $\G_i$ have length $l_i\geq 2$ (if $\G$ has at least two vertices). Therefore $|HK_{\G_i\setminus\{a_i,a_{i+1}\}}|=C_{l_i-1}$.
\item Assume that exactly one of $i,i+1$ is in $Q$. Without loss of generality assume $i\in A$. Similarly as above we count the number of elements that \emph{do not} contain $a_{i+1}$. That cardinality is $C_{l_i}$. However, we need to exclude the elements that do not contain $a_i$, leaving us with exactly $C_{l_i}-C_{l_i-1}$.
\item If both of $i,i+1$ are in $Q$, we use the inclusion exclusion formula to get $C_{l_i+1}-2(C_{l_i}-C_{l_i-1})-C_{l_i-1}=C_{l_i+1}-C_{l_i}+C_{l_i-1}$ elements.
\end{enumerate}
		
When $i=1$ or $i=k+1$, we get the following two cases. Depending on $i$, let $a=a_1$ or $a_k$.
\begin{enumerate}
\item Elements in $HK_{\G_i}$ that do not contain $a$. There are $C_{l_i}$ such elements.
\item Elements in $HK_{\G_i}$ that do contain $a$. By exclusion there are $C_{l_i+1}-C_{l_i}$ such elements.
\end{enumerate}
Now the second claim of our theorem follows from \eqref{eq323} and the definition of $c_i(A)$.
		
We will prove the last claim by showing that it satisfies the same recursion formula as the odd Fibonacci numbers and has the same initial values. Note that if the number of vertices $n$ is fixed there are only two possibilities for a graph of type $A_n$ to have alternating sinks or sources (the first vertex can either be a sink or a source). However, these graphs are opposite to each other, so the cardinalities of the corresponding HK-monoids have to be the same 
by \cite[Theorem~1(v)]{GM11}. Let $\mathcal{A}_n$ be the graph of type $A_n$ with alternating sinks and sources
whose vertices are $v_i$, $i\in\underline{n}$, and we assume that $v_1$ is a source. Let $f_n=|HK_{\mathcal{A}_n}|$. The first two $\mathcal{A}_n$ are $\mathcal{A}_0=$ the empty graph, and $\mathcal{A}_1=v_1$. This gives $f_0=|\{\varepsilon\}|=1=F_{2\cdot0+1}$ and $f_1=|\{\varepsilon,[v_1]\}|=2=F_{2\cdot1+1}$. The Fibonacci numbers satisfy the recursion formula
\[F_{n+2}=F_{n+1}+F_n=2F_n+F_{n-1}=3F_n-F_{n-2}.\]
Thus we want to show that $f_{n+1}=3f_n-f_{n-1}$. We can separate the elements of 
$HK_{\mathcal{A}_{n+1}}$ into two groups. Either the content of an element contains $v_n$ or it does not. 
\begin{enumerate}
\item Assume $v_n\not\in\mathfrak{c}[\w]$, then $[\w]$ equals an element from $HK_{\mathcal{A}_{n-1}}$ 
multiplied with either $\varepsilon$ or $v_{n+1}$. Thus there are  $f_{n-1}\cdot2=2f_{n-1}$ such elements.
\item Assume $v_n\in\mathfrak{c}[\w]$, then $[\w]$ is a product of an element
from $HK_{\mathcal{A}_n}$ \emph{containing} $v_n$ with an element from $HK_{\mathcal{B}}$ \emph{containing} $v_n$,
where $\mathcal{B}$ is the full subgraph of $\mathcal{A}_{n+1}$ with vertices $\{v_n,v_{n+1}\}$. 
We have $(f_n-f_{n-1})\cdot3=3f_n-3f_{n-1}$ such elements. The '3' in the formula corresponds to how $v_{n+1}$ relates to $v_n$. Either there is no $v_{n+1}$ or there is exactly one, and in that case it only matters if it comes before or after $v_n$.
\end{enumerate} 
This implies $f_{n+1}=3f_n-f_{n-1}$ and completes the proof of our theorem.
\end{proof}
	
The sequence $F_{2n+1}$ was guessed with the help of \cite{OEIS} and has been found independently by Grensing \cite{Gre12}.
	
\begin{cor}
Let $\G$ be of type $A_n$. Then the number of multiplicity free elements in $HK_\G$ is the Fibonacci number $F_{2n+1}$. In particular, it does not depend on the orientation of edges in $\G$.
\end{cor}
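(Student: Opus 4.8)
The plan is to pass from all multiplicity-free elements to those of maximal content and then exploit the decomposition of $HK_\G$ by content. Write $N(\G)$ for the number of multiplicity-free elements of $HK_\G$, and for a full subgraph $K\subseteq\G$ let $M(K)$ denote the number of multiplicity-free elements lying in $\m(K)$, i.e. of maximal content $V(K)$. Each multiplicity-free $[\w]$ has a content $S=\mathfrak{c}[\w]$ and is represented by a word using only letters of $S$; since $HK_{\G[S]}$ (the full subgraph on $S$) embeds as a submonoid of $HK_\G$ and both multiplicity-freeness and content are intrinsic to the word, the multiplicity-free elements of $HK_\G$ of content $S$ are exactly the multiplicity-free elements of maximal content of $HK_{\G[S]}$. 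As $\G[S]$ is the disjoint union of the subpaths it induces and $HK$ sends disjoint unions to direct products, a tuple is multiplicity-free of maximal content iff each coordinate is, giving
\[N(\G)=\sum_{S\subseteq V(\G)}M(\G[S])=\sum_{S\subseteq V(\G)}\prod_{K}M(K),\]
the inner product over the connected components $K$ of $\G[S]$. Everything thus reduces to computing $M(K)$ for a connected graph of type $A_j$.

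The \emph{main lemma} I would prove is that $M(\G)=2^{\,n-1}$ for every connected $\G$ of type $A_n$, independently of orientation. A multiplicity-free element of maximal content is represented by a permutation word $\w$ in which each vertex occurs exactly once. For an edge $e=\{v,w\}$ let $p_e:HK_\G\to HK_{\G[\{v,w\}]}$ be the canonical projection onto the two-vertex subgraph carrying $e$; since $[vw]\neq[wv]$ in $HK_{\G[\{v,w\}]}$, the value $p_e([\w])\in\{[vw],[wv]\}$ records which of $v,w$ comes first in $\w$. Because each $p_e$ is a homomorphism, the tuple $\Phi([\w])=\big(p_e([\w])\big)_{e\in E(\G)}$ is a well-defined invariant of the element, taking one of $2^{\,|E(\G)|}=2^{\,n-1}$ values. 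It is surjective: any choice of a first vertex per edge orients the path, which is acyclic, so a topological sort yields a permutation realizing the prescribed edge-orders. It is injective: if $\Phi([\w])=\Phi([\w'])$ then $\w,\w'$ have the same relative order on every edge, hence are equal in the partial-commutation (trace) monoid in which two letters commute exactly when they are non-adjacent — this is the classical projection lemma for traces — and those commutations are instances of relation (II), so $\w\sim\w'$. Thus $\Phi$ is a bijection and $M(\G)=2^{\,n-1}$.

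The main obstacle is precisely this injectivity, and the point to get right is that it avoids any analysis of relation (III): equal edge-orders already force equality through commutations alone, while well-definedness of $\Phi$ (from the homomorphism property of the $p_e$) guarantees that relation (III) can never merge two elements with different $\Phi$. It is worth double-checking, as a sanity test, that for $n=2$ this gives $M=2$ and for $n=3$ it gives $M=4$ in both admissible orientations, matching the values implicit in the previous corollary (where the alternating case has all $F_{2n+1}$ elements multiplicity-free by Lemma~\ref{lemma:siso} applied with $A=V(\G)=\siso_\G$).

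Finally, substituting $M(K)=2^{\,|K|-1}$ and writing $c(S)$ for the number of maximal runs of $S$ along the path, the displayed formula becomes $N(\G)=\sum_{S\subseteq V(\G)}2^{\,|S|-c(S)}$, which visibly depends only on $n$. To identify this with $F_{2n+1}$ I would split the sum according to whether the last vertex lies in $S$; letting $a_n,b_n$ be the two partial sums and $T_n=a_n+b_n$, appending a vertex gives $b_{n+1}=a_n+b_n$ and $a_{n+1}=2a_n+b_n$ (a run is either extended, contributing a factor $2$, or started, contributing a factor $1$). Eliminating $a_n=T_n-T_{n-1}$ and $b_n=T_{n-1}$ yields $T_{n+1}=3T_n-T_{n-1}$, which with $T_0=1=F_1$ and $T_1=2=F_3$ is exactly the odd-Fibonacci recursion used in the previous corollary. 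Hence $N(\G)=F_{2n+1}$ for every orientation, and in particular the count is independent of the orientation of the edges.
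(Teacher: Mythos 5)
Your proof is correct, but it follows a genuinely different route from the paper's. The paper proves the corollary by building an explicit bijection $\phi$ from the set of multiplicity-free elements of $HK_\G$ onto all of $HK_{\mathcal{A}_n}$ (the alternating orientation, where every element is multiplicity free), and then quotes the previous theorem for $|HK_{\mathcal{A}_n}|=F_{2n+1}$. Its key invariant is the same one you use --- the relative order of each adjacent pair $v_i,v_{i+1}$, made well defined by projecting onto the two-vertex full subgraph --- but injectivity is established by an explicit rewriting procedure that deforms any multiplicity-free word into a canonical form $\w_n$, rather than by your trace-theoretic argument. You instead decompose by content, reduce to connected pieces via the disjoint-union splitting, and prove the local lemma $M(K)=2^{|K|-1}$ for a path on $|K|$ vertices: surjectivity by topological sort, injectivity by the projection lemma for trace monoids, with relation (III) neutralized once and for all because the edge projections $p_e$ are homomorphisms (the same trick the paper uses for well-definedness of its invariant). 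Summing $2^{|S|-c(S)}$ over subsets and running the recursion $T_{n+1}=3T_n-T_{n-1}$ then re-derives $F_{2n+1}$ without invoking the earlier cardinality computation --- amusingly, it is the same recursion the paper used to count $HK_{\mathcal{A}_n}$ one theorem earlier. What each approach buys: the paper gets a structural bijection with $HK_{\mathcal{A}_n}$ and a shorter proof given prior results; yours is self-contained on the counting side, makes orientation-independence manifest (your closed formula visibly depends only on $n$), and isolates the reusable local count $M(K)=2^{|K|-1}$. Two details worth writing out if you formalize this: the identification of content-$S$ multiplicity-free elements of $HK_\G$ with maximal-content ones of $HK_{\G[S]}$ needs the fact that $HK_{\G[S]}$ is a retract of $HK_\G$ via the canonical projection (so equality in $HK_\G$ implies equality in $HK_{\G[S]}$), and the projection lemma has a short direct proof in the multiplicity-free case (repeatedly bubble the first letter of $\w'$ to the front of $\w$ using relation (II)), so you need not cite it as a black box.
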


\begin{proof}
Lemma \ref{lemma:siso} tells us that every element $[\w]$ contains a word $\w'$ which is multiplicity free \emph{with respect to every source and sink}. Since $\mathcal{A}_n$ has only sources and sinks, every element of $HK_{\mathcal{A}_n}$ is in fact multiplicity free. Since $|HK_{\mathcal{A}_n}|=F_{2n+1}$ by the previous theorem, we need to show that there is a bijection between multiplicity free elements of $HK_\G$ and $HK_{\mathcal{A}_n}$. Let $\w$ be a multiplicity free word and assume that $\G$ is enumerated canonically, i.e. such arrows connect vertices of difference 1. Then for each $i\in\underline{n-1}$ exactly one of the following holds.
\begin{enumerate}[(1)]
\item $v_{i+1}\not\in\mathfrak{c}[\w]$.
\item $v_i\not\in\mathfrak{c}[\w]$ but $v_{i+1}\in\mathfrak{c}[\w]$.
\item Both $v_i,v_{i+1}\in\mathfrak{c}[\w]$ and $v_i$ appears before $v_{i+1}$ in $\w$.
\item Both $v_i,v_{i+1}\in\mathfrak{c}[\w]$ and $v_{i+1}$ appears before $v_{i}$ in $\w$.
\end{enumerate}
We claim that these properties do not depend on the choice of a \emph{multiplicity free} word $\w'\in[\w]$. 
For the first two properties the claim is obvious. Let $\G_i$ be the complete subgraph of $\G$ whose vertices are $v_i,v_{i+1}$, and consider the map $p:\big(V(\G)\big)^*\to\big(V(\G_i)\big)^*$ defined by deletion of vertices not in $\G_i$. It is clear that
\begin{enumerate}[(a)]
\item If $\w$ is multiplicity free, then so is $p(\w)$.
\item If $\w\sim\w'$ then $p(\w)\sim p(\w')$.
\end{enumerate}
This means that under $p$ all multiplicity free words that contain both $v_i$ and $v_{i+1}$ are mapped to $v_iv_{i+1}$ or $v_{i+1}v_i$. However, there is an edge between $v_i$ and $v_{i+1}$, so $v_iv_{i+1}\not\sim v_{i+1}v_i$, proving 
our claim for the third and the fourth properties.
		
We will show that multiplicity free elements are uniquely determined by the relative positions of 
$v_i$ and $v_{i+1}$ for each $i$. Set
\[\w_1=\begin{cases}v_1,&\text{ if }v_1\in\mathfrak{c}[\w];\\ \varepsilon,&\text{ if }v_1\not\in\mathfrak{c}[\w];\end{cases}\]
\[\w_{i+1}=\begin{cases}%
\w_i,&\text{ if }v_{i+1}\not\in\mathfrak{c}[\w];\\
\w_iv_{i+1},&\text{ if }v_i\not\in\mathfrak{c}[\w]\text{ or }v_i\text{ appears to the left of  }v_{i+1}\text{ in }\w;\\
v_{i+1}\w_i,&\text{ if }v_{i+1}\text{ appears to the left of }v_i\text{ in }\w.
\end{cases}\]
Let $\mathfrak{M}\subset HK_\G$ denote the set of all multiplicity free elements and define the map $\phi:\mathfrak{M}\to HK_{\mathcal{A}_n}$  by $\phi([\w])=[\w_n]$, where $\w$ is multiplicity free and $\w_n$ is defined 
from $\w$ by the above above. Note that $\phi$ is well-defined because it only uses invariant properties. Since $\phi:HK_{\mathcal{A}_n}\to HK_{\mathcal{A}_n}$ is the identity and the multiplicity free \emph{words} are the same for all $\G$ (over the same vertices) it follows that $\phi$ is surjective for any domain. To show that $\phi$ is injective it suffices to show that $\w\sim\w_n$ (in $HK_\G$) for each multiplicity free $\w$. We show this by deforming $\w$ into $\w_n$. Clearly $\w$ can be factorized as $\w=\x_1\w_1\y_1$, where 
$v_1\not\in\mathfrak{c}([\x_1]),\mathfrak{c}([\y_1])$. Assume that $\w\sim\x_i\w_i\y_i$ where $\{v_1,\cdots,v_i\}\cap\big(\mathfrak{c}([\x_i])\cup\mathfrak{c}([\y_i])\big)=\emptyset$. 
Note that $\w_i$ commutes with every vertex  \emph{except} $v_{i+1}$.
\begin{enumerate}
\item If $v_{i+1}\not\in\mathfrak{c}([\w])$, then $\w_{i+1}=\w_i$.
\item If $v_i\not\in\mathfrak{c}([\w])$ but $v_{i+1}\in\mathfrak{c}([\w])$ then $\w_i$ commutes with every vertex of $\set{v_{i+1},\cdots,v_n}$ and we may move it so that it ends up just to the left of $v_{i+1}$. 
\item If both $v_i,v_{i+1}\in\mathfrak{c}([\w])$ and $v_i$ appears before $v_{i+1}$ in $\w$ then $v_{i+1}$ is in $y_i$ and $\w_i$ commutes with every vertex in $\y_i$ preceding $v_{i+1}$. Hence, using edge relations, we may move  $\w_i$ so that it ends up just to the left of $v_{i+1}$.
\item If both $v_i,v_{i+1}\in\mathfrak{c}([\w])$ and $v_{i+1}$ appears before $v_{i}$ in $\w$, then $v_{i+1}$ is in $x_i$, and we move $\w_i$ 
so that it ends up just to the right of $v_{i+1}$.
\end{enumerate}
In all cases we find that
\[\w\sim\x_i\w_i\y_i\sim\x_{i+1}\w_{i+1}\y_{i+1}\]
for some $\x_{i+1}$ and $\y_{i+1}$ such that $\{v_1,\cdots,v_i,v_{i+1}\}
\cap\big(\mathfrak{c}([\x_{i+1}])\cup\mathfrak{c}([\y_{i+1}])\big)=\emptyset$. Proceeding inductively, we get
$\w\sim\x_n\w_n\y_n$ such that $\mathfrak{c}([\x_n])=\mathfrak{c}([\y_n])=\emptyset$, so $\w\sim\w_n$.
\end{proof}
	
To illustrate the process that turns $\w$ into $\w_n$ let $\w=v_3v_6v_1v_4v_2$.

\begin{tabular}{lll}
$\x_1\w_1\y_1=(v_3v_6)(v_1)(v_4v_2)$&$\x_2\w_2\y_2=(v_3v_6v_4)(v_1v_2)()$&$\x_3\w_3\y_3=()(v_3v_1v_2)(v_6v_4)$\\
$\x_4\w_4\y_4=(f)(v_3v_1v_2v_4)()$&$\x_5\w_5\y_5=(v_6)(v_3v_1v_2v_4)()$&$\x_6\w_6\y_6=()(v_3v_1v_2v_4v_6)()$
\end{tabular}
	
As a consequence of the previous corollary we have $F_{2n+1}\leq|HK_\G|$ for any $\G$. We also have $|HK_\G|\leq C_{n+1}$ by \cite[Theorem~1(vi)]{GM11}. It seems plausible that more sources and sinks corresponds to a smaller monoid. The following theorem makes this idea more precise.
	
\begin{thm}
Let $\G=\G_1\cup\G_2$ be an edge disjoint union of graphs such that $\G_1\cap\G_2=\{a\}$ is a source or a sink. Let $\overleftarrow{\G_2}$ be the graph obtained from $\G_2$ by reversing the direction of all edges and let $\tilde{\G}=\G_1\cup\overleftarrow{\G_2}$. Then $|HK_\G|\leq|HK_{\tilde{\G}}|$ and the equality holds if and only 
if  $a$ is isolated in at least one of $\G_1,\G_2$.
\end{thm}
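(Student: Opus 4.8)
The plan is to compare the two monoids through their decomposition according to the content at $a$. Throughout write $A_{\G}:=\{[\w]\in HK_{\G}\mid a\in\mathfrak{c}[\w]\}$ and $B_{\G}:=\{[\w]\in HK_{\G}\mid a\notin\mathfrak{c}[\w]\}$, and analogously for every graph derived from $\G$, so that $|HK_{\G}|=|A_{\G}|+|B_{\G}|$. Passing if necessary to the opposite monoid (which reverses every arrow and preserves cardinality by \cite[Theorem~1(v)]{GM11}), I may assume that $a$ is a source of $\G$; then $a$ is a source in both $\G_{1}$ and $\G_{2}$, hence a source in $\G_{1}$ and a sink in $\overleftarrow{\G_{2}}$. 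First I would dispose of the $B$-parts: an element avoiding $a$ lives in the full subgraph on $V(\G)\setminus\{a\}$, which is the disjoint union of $\G_{1}\setminus\{a\}$ and $\G_{2}\setminus\{a\}$ for $\G$, and of $\G_{1}\setminus\{a\}$ and $\overleftarrow{\G_{2}}\setminus\{a\}$ for $\tilde{\G}$. Since reversing edges leaves the cardinality of an HK-monoid unchanged, this gives $|B_{\G}|=|B_{\tilde{\G}}|$, so the theorem reduces to comparing $|A_{\G}|$ with $|A_{\tilde{\G}}|$.

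For the $A$-parts I would first record that the injection $p=(p_{1},p_{2})$ of Proposition~\ref{thm:p-inj} restricts to a bijection $A_{\G}\to A_{\G_{1}}\times A_{\G_{2}}$: injectivity is Proposition~\ref{thm:p-inj}, and surjectivity follows from the preimage construction in the proof of Theorem~\ref{thmproduct}, namely $([\x],[\y])\mapsto[\x_{1}\y_{1}a\x_{2}\y_{2}]$ where $\x\sim\x_{1}a\x_{2}$ and $\y\sim\y_{1}a\y_{2}$ are multiplicity free at $a$ (available by Lemma~\ref{lemma:siso}). Hence $|A_{\G}|=|A_{\G_{1}}|\cdot|A_{\G_{2}}|=|A_{\G_{1}}|\cdot|A_{\overleftarrow{\G_{2}}}|$, using reversal invariance for the last equality. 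Next I would consider the homomorphism $q=(p_{1},q_{2})\colon HK_{\tilde{\G}}\to HK_{\G_{1}}\times HK_{\overleftarrow{\G_{2}}}$ and show that its restriction $A_{\tilde{\G}}\to A_{\G_{1}}\times A_{\overleftarrow{\G_{2}}}$ is surjective: for $([\x],[\y])$ with $a$ in both contents the very same element $[\x_{1}\y_{1}a\x_{2}\y_{2}]$ is a preimage, since the vertices of $\G_{1}\setminus\{a\}$ and $\overleftarrow{\G_{2}}\setminus\{a\}$ commute, and its projections recover $\x$ and $\y$ exactly as in Proposition~\ref{thm:p-inj}. Surjectivity forces $|A_{\tilde{\G}}|\ge|A_{\G_{1}}|\cdot|A_{\overleftarrow{\G_{2}}}|=|A_{\G}|$, and together with $|B_{\G}|=|B_{\tilde{\G}}|$ this yields $|HK_{\G}|\le|HK_{\tilde{\G}}|$.

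It then remains to analyse when equality holds, i.e. when $q$ is injective on $A_{\tilde{\G}}$. If $a$ is isolated in $\G_{1}$ (resp. in $\G_{2}$), then in $\tilde{\G}$ every edge at $a$ comes from $\overleftarrow{\G_{2}}$ (resp. from $\G_{1}$), so $a\in\siso_{\tilde{\G}}$ and Proposition~\ref{thm:p-inj} applies verbatim to $\tilde{\G}=\G_{1}\cup\overleftarrow{\G_{2}}$, making $q$ injective and the above a bijection; hence equality. Conversely, if $a$ is isolated in neither, I would produce an explicit witness to non-injectivity: choose a neighbour $u$ of $a$ in $\G_{1}$ (so $a\to u$ in $\tilde{\G}$) and a neighbour $w$ of $a$ in $\G_{2}$ (so $w\to a$ in $\overleftarrow{\G_{2}}$); as $\G_{1}\cap\G_{2}=\{a\}$ there is no edge between $u$ and $w$, so the full subgraph on $\{w,a,u\}$ is the linearly ordered $w\to a\to u$. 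Here the two elements $[au][wa]=[auwa]$ and $[wa][au]=[wau]$ both lie in $A_{\tilde{\G}}$ and both map to $([au],[wa])$ under $q$, yet the effective representation $R_{1}$ of this linear $A_{3}$ (see \cite{GM11}) gives $R_{1}([wau])=0\neq R_{1}([auwa])$, so they are distinct and $q$ is not injective.

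I expect the equality direction to be the main obstacle. The inequality itself is a clean surjectivity statement, but determining exactly when injectivity fails requires two somewhat different observations: that an isolated $a$ restores the source/sink property in $\tilde{\G}$ so that Proposition~\ref{thm:p-inj} again becomes available, and that whenever $a$ sits on a genuine length-two directed path $w\to a\to u$ one obtains a non-multiplicity-free element at $a$, which is the precise combinatorial source of the strict gap $|A_{\tilde{\G}}|>|A_{\G}|$.
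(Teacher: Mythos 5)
Your proof is correct, but it takes a genuinely different route from the paper's. The paper splits each monoid into \emph{three} parts according to the multiplicity of $a$: elements avoiding $a$, elements containing $a$ and multiplicity free with respect to $a$, and elements not multiplicity free with respect to $a$. It then builds an explicit bijection between the middle parts, $\phi[\x_1\y_1a\x_2\y_2]=[\x_1\overleftarrow{\y_2}a\x_2\overleftarrow{\y_1}]$, whose well-definedness is checked by a somewhat fiddly computation with the two projections and reversal invariance; the inequality follows because the third part is empty for $\G$ (by Lemma~\ref{lemma:siso}, as $a\in\siso_\G$), and equality is characterized by emptiness of the third part for $\tilde{\G}$, with witness $[abca]$ whenever $b\to a\to c$ in $\tilde{\G}$. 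You avoid constructing $\phi$ altogether: after the common step $|B_\G|=|B_{\tilde\G}|$, you count $|A_\G|=|A_{\G_1}|\cdot|A_{\G_2}|$ exactly by combining the injectivity of Proposition~\ref{thm:p-inj} with the surjectivity construction from Theorem~\ref{thmproduct}, and then bound $|A_{\tilde\G}|$ from below by the surjection $q$; the price is that the equality analysis needs two extra observations, both of which you supply correctly --- that an isolated $a$ keeps $a\in\siso_{\tilde\G}$ so Proposition~\ref{thm:p-inj} applies to $\tilde\G$, and the explicit non-injectivity witness $[auwa]\neq[wau]$, verified via $R_1$ on the linearly ordered full subgraph $w\to a\to u$ together with the embedding of the HK-monoid of a full subgraph. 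Your witness is essentially the same combinatorial object as the paper's $[abca]$: both exploit that a directed length-two path through $a$ in $\tilde\G$ produces elements that are not multiplicity free at $a$, which is exactly the ``excess'' that the paper isolates as $HK_{\tilde\G}^2$. So your approach recycles established machinery where the paper does a bespoke construction, while the paper's decomposition gives a more transparent description of where the extra elements of $HK_{\tilde\G}$ live. One shared caveat: your step ``a surjection that is not injective forces strict inequality,'' like the paper's ``equality holds precisely when $HK_{\tilde\G}^2=\emptyset$,'' silently assumes the monoids involved are finite (equivalently, that $\G_1$ and $\G_2$ contain no oriented cycles); as a pure cardinality statement the ``only if'' direction fails in the infinite case, but this is an implicit hypothesis of the theorem itself, not a defect particular to your argument.
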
	

\begin{proof}
For any HK-monoid $HK_{\G'}$ and $a\in V(\G')$ define 
\begin{align*}
HK_{\G'}^0&=\{[\w]\in HK_{\G'}| a\not\in\mathfrak{c}[\w]\},\\
HK_{\G'}^1&=\{[\w]\in HK_{\G'}| a\in\mathfrak{c}[\w]\text{ and }[\w]\text{ is multiplicity free with respect to }a\},\text{ and}\\
HK_{\G'}^2&=\{[\w]\in HK_{\G'}|[\w]\text{ is not multiplicity free with respect to }a\}.
\end{align*}
Clearly $HK_{\G'}$ is a disjoint union of $HK_{\G'}^0$, $HK_{\G'}^1$ and $HK_{\G'}^2$. 
		
Because $|HK_{\G'}|=|HK_{\overleftarrow{\G'}}|$ by \cite[Theorem~1(v)]{GM11}, it follows that
\[|HK_{\G}^0|=|HK_{\G_1\setminus\{a\}}||HK_{\G_2\setminus\{a\}}|=|HK_{\G_1\setminus\{a\}}||HK_{\overleftarrow{\G_2\setminus\{a\}}}|=|HK_{\tilde{\G}}^0|.\]
To see that $|HK_{\G}^1|=|HK_{\tilde{\G}}^1|$ observe that any element $[\w]$ in $HK_{\G}^1$ has a word of the form $\w=\x_1\y_1a\x_2\y_2$ for some $\x_1,\x_2\in HK_{\G_1\setminus\{a\}},\y_1,\y_2\in HK_{\G_2\setminus\{a\}}$. Similarly for $HK_{\tilde{\G}}^1$. This is true because there are no edges between $\G_1\setminus\{a\}$ and $\G_2\setminus\{a\}$. Let $\phi:HK_\G^1\to HK_{\tilde{\G}}^1$ be defined by $\phi[\x_1\y_1a\x_2\y_2]=\x_1\overleftarrow{\y_2}a\x_2\overleftarrow{\y_1}$, where $\overleftarrow{\y}$ is the reverse of $\y$. Note that $\phi$ is a bijection if it is well-defined. To see that it is well-defined, assume $\x_1\y_1a\x_2\y_2\sim\x_1'\y_1'a\x_2'\y_2'$. By taking the projection morphisms (onto $HK_{\G_1}$ and $HK_{\G_2}$, respectively) we obtain $\x_1a\x_2\sim\x_1'a\x_2'$ and $\y_1a\y_2\sim\y_1'a\y_2'$. Note that $\y\sim\y'\iff\overleftarrow{\y}\sim\overleftarrow{\y'}$. We have:		
\begin{multline}\phi(\w)=%
\phi(\x_1\y_1a\x_2\y_2)=%
\x_1\overleftarrow{\y_2}a\x_2\overleftarrow{\y_1}\sim%
\x_1\overleftarrow{\y_2}a\overleftarrow{\y_1}\x_2=%
\x_1\overleftarrow{\y_1a\y_2}\x_2\sim\\
\x_1\overleftarrow{\y_1'a\y_2'}\x_2=%
\x_1\overleftarrow{\y_2'}a\overleftarrow{\y_1'}\x_2\sim%
\overleftarrow{\y_2'}\x_1a\x_2\overleftarrow{\y_1'}\sim%
\overleftarrow{\y_2'}\x_1'a\x_2'\overleftarrow{\y_1'}\sim%
\x_1'\overleftarrow{\y_2'}a\x_2'\overleftarrow{\y_1'}=\phi(\w').
\end{multline}
This implies that $|HK_{\G}^1|=|HK_{\tilde{\G}}^1|$.

Thus we have the inequality
\[|HK_\G|=|HK_\G^1|+|HK_\G^2|=|HK_\G^1|=|HK_{\tilde{\G}}^1|\leq|HK_{\tilde{\G}}^1|+|HK_{\tilde{\G}}^2|=|HK_{\tilde{\G}}|.\]
It remains to show that the inequality is an equality precisely when $a$ is isolated in at least one of $\G_1$ and $\G_2$. Assume that $a$ is isolated in $\G_1$. Then $a$ is a source in $\tilde{\G}$ if it is a sink in $\G$ and a sink in $\tilde{\G}$ if it is a source in $\G$. In any case, the set $HK_{\tilde{\G}^2}$ is empty, and the inequality is an equality. The case when $a$ is isolated in $\G_2$ follows by symmetry. Note that $a$ can be isolated in $\G_1$ and $\G_2$ simoultaneously, which is the case precisely when $a$ is both a sink and a source at the same time.

Now assume that $a$ is \emph{not} isolated in $\G_1$ or $\G_2$. Then there exists one of the following subgraphs of $\G$, depending on if $a$ is a sink or a source:
\[\xymatrix{b\ar[r]&a&\ar[l]c&\text{or}&b&a\ar[l]\ar[r]&c},\]
for some $b$ in $\G_1$ and $c$ in $\G_2$. In $\tilde{G}$, the same subgraphs turn into
\[\xymatrix{b\ar[r]&a\ar[r]&c&\text{or}&b&a\ar[l]&c\ar[l]}.\]
In either case we have $[abca]\in HK_{\tilde{\G}}^2$, proving that the inequality is in fact strict.
\end{proof}
Note that if $\G$ is cycle free, then so is $\tilde{\G}$. Thus only finitely many elements can make up the difference between the corresponding Hecke-Kiselman monoids. 

We illustrate the use of the theorem by the following figure with graphs of type $A_5$. We put one graph above another if we based on the theorem, together with reversals or graph isomorphisms, can tell that the Hecke-Kiselman monoid of the first has more elements than the Hecke-Kiselman monoid of the second. 
\[\xymatrix{&\ra\ra\ra\ra\ar@{-}[dl]\ar@{-}[dr]\\
\ra\ra\ra\la\ar@{-}[d]\ar@{-}[drr]&&\ra\ra\la\la\ar@{-}[dll]\\
\ra\ra\la\ra\ar@{-}[dr]&&\ra\la\la\ra\ar@{-}[dl]\\
&\ra\la\ra\la}\]

\section{Limits and extensions of the method}\label{s5}

The main idea of taking smaller graphs, whose HK-monoids have known effective representations, and gluing them together has a few limitations:
\begin{enumerate}
\item We only know effective representations for a few types of graphs \emph{that themselves are not reached in this way}. So far we have the linearly ordered graphs of type $A_n$ and the graphs $\kappa_n$ of the Kiselman monoids.
\item When we glue two subgraphs together we are limited to gluing sinks to sinks and sources to sources. In particular we can not handle trees in general, but only trees which branch in sources and sinks.
\item When we glue two subgraphs together the intersection must consist of one vertex. In particular we can not handle cycles (which give infinite monoids $HK_\G$, but nevertheless could have finite effective dimension).
\end{enumerate}
We may enlarge the set of ``building blocks'' by considering the following family of graphs:
\[\xymatrix{Z_n= & &a\ar[dll]\ar[dl]\ar[d]|\cdots\ar[dr]\ar[drr]\\
v_3\ar[drr]&v_4\ar[dr]&\cdots\ar[d]|\cdots&v_{n-1}\ar[dl]&v_n\ar[dll]\\
&&b}\]
We can get a complete description of the elements in $HK_{Z_n}$ by observing that, since $a$ is a source, any element contains a word with at most one $a$. The subwords before and after $a$ do not contain $a$ and could therefore be thought of as living in $\big(V(Z_n\setminus\{a\})\big)^*$. However, in $Z_n\setminus\{a\}$ every vertex is a sink or a source, so we may assume that there is at most one $v_i$ on each side of $a$. A similar argument holds for $b$. Thus  each element $[\w]$ in $HK_{Z_n}$ contains a word $\w'$ which falls into exactly one of the following types:
\begin{enumerate}
\item We have $a,b\not\in\mathfrak{c}(\w')$ and $\w'$ multiplicity free. Since all $v_i$ commute with each other we may take $\w'=v_{i_1}v_{i_1}\cdots v_{i_j}$ with $i_1<i_2<\cdots<i_j$.
\item We have $a\in\mathfrak{c}[\w'],b\not\in\mathfrak{c}[\w']$ and $\w'=\w_1a\w_2$ is multiplicity free. Similarly as in the first case, $\w_1,\w_2$ may be taken with internally increasing order.
\item We have $a\not\in\mathfrak{c}[\w'],b\in\mathfrak{c}[\w']$ and $\w'=\w_1b\w_2$ is multiplicity free. As in 
the second case, $\w_1,\w_2$ may be taken with internally increasing order.
\item\label{eq71} We have $a,b\in\mathfrak{c}[\w']$ and $\w'=\w_1a\w_2b\w_3$. We may take $\w'$ such that 
$\mathfrak{c}(\w_1)\cap\mathfrak{c}(\w_2)=\emptyset$ and $\mathfrak{c}(\w_2)\cap\mathfrak{c}(\w_3)=\emptyset$
and $\w_1,\w_2$ and $\w_3$ are internally increasing.
\item\label{eq72} We have $a,b\in\mathfrak{c}[\w']$ and  $\w'=\w_1b\w_2a\w_3$ with the same restrictions as in the previous case
and, additionally, $\w_2\neq\varepsilon$. 
\end{enumerate}
The proof of the next theorem shows that all these elements are, in fact, different.

\begin{thm}
Let $\G=Z_n$ and let $f$ be defined by $f_{av_i}=1,f_{v_ib}=2^i$ for all $i$. 
Then $R_f$ is an effective representation of $HK_{Z_n}$.
\end{thm}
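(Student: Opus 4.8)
The plan is to prove effectiveness by explicitly computing the matrix $R_f([\w'])$ for a representative of each of the five types listed above, and showing that these matrices are pairwise distinct. Since effectiveness means distinct elements get distinct matrices, and the preceding discussion already exhibits a canonical representative $\w'$ for each element of $HK_{Z_n}$, it suffices to verify that the map sending each type-representative to its matrix is injective. First I would record the atomic transformations: $\theta_a$ sends $a\mapsto\sum_{v_i\to a}\cdots$, but since $a$ is a source, $a$ is not a target of any arrow, so $\theta_a(a)=0$ in the quotient $W$ (nothing maps into $a$); meanwhile $\theta_a(b)=b$ and $\theta_a(v_i)=v_i$. Dually $\theta_b(b)=\sum_i f_{v_ib}v_i=\sum_i 2^i v_i$, and $\theta_b$ fixes $a$ and each $v_i$. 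Finally $\theta_{v_i}(v_i)=f_{av_i}a=a$ (since the only arrow into $v_i$ comes from $a$), and $\theta_{v_i}$ fixes all other basis vectors. The key design feature of the weights is that the coefficients $2^i$ are chosen so that the $b$-component of the image of $b$ — or rather the linear combination of $v_i$'s produced — encodes, via binary digits, exactly which $v_i$ occur in the word and in what configuration.

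The central computation is to track, for a word $\w'=\w_1\alpha\w_2\beta\w_3$ of each type, where the basis vectors $a$, $b$, and the $v_i$ are sent. I would organize this by evaluating $R_f([\w'])$ on each basis vector $v_j$, on $a$, and on $b$, reading the word right-to-left as compositions of the atomic maps. The crucial observables are: (i) which $v_i$ appear in $\mathfrak{c}(\w')$, recoverable from whether $\theta_{v_i}$ has been applied; (ii) whether $a$ and/or $b$ appear; and (iii) the relative order data distinguishing types (4) and (5), namely whether $a$ precedes $b$ or vice versa, together with the partition of the $v_i$ into the blocks $\w_1,\w_2,\w_3$. The weights $f_{v_ib}=2^i$ guarantee that the $v_i$-coefficients appearing in $R_f([\w'])(b)$ are distinct powers of two summed over distinct index sets, so the content and the block-structure can be read off uniquely from the resulting integer coefficients; this is precisely why base-$2$ weights on the $v_i\to b$ edges are used.

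The main obstacle will be distinguishing types (4) and (5) and, within each, separating different choices of the block decomposition $(\w_1,\w_2,\w_3)$ — this is where the orientation asymmetry between $a$ (source) and $b$ (sink) must be exploited carefully. Concretely, when $\alpha=a$ comes before $\beta=b$ versus the reverse, the order in which $\theta_a$ (which kills the $a$-coordinate and leaves $v_i$ untouched) and $\theta_b$ (which expands $b$ into $\sum 2^i v_i$) interact with the intervening $\theta_{v_i}$ produces genuinely different coefficient patterns. I would show that the integer vector $\bigl(R_f([\w'])(b)\bigr)$, expanded in the basis $\{v_i\}\cup\{a,b\}$, together with the scalar $R_f([\w'])(a)$, already separates all five types; the binary encoding ensures that two representatives giving the same matrix must have identical content on every vertex and identical block assignment, forcing $\w'=\w''$ as canonical forms. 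The condition $\w_2\neq\varepsilon$ imposed in type (5) is exactly the normalization needed to avoid a collision with type (4) in the degenerate case, and verifying that this single exclusion suffices — that no other coincidences occur — is the delicate endpoint of the argument.
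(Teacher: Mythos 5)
Your approach is sound and would yield a complete proof, but it is organized quite differently from the paper's. The paper first observes that $R_f([\w])(x)=x$ if and only if $x\notin\mathfrak{c}(\w)$, then disposes of every element whose content misses $a$ or $b$ by invoking the gluing/effectiveness results of Section~4 (such an element lives in a subgraph assembled from type $A_2$ pieces), handles contents containing $a,b$ but missing some $v_i$ by induction on $n$, and performs a direct matrix computation only for the two maximal-content types (4) and (5). You instead propose one uniform computation covering all five canonical types, followed by injectivity of the word-to-matrix map. This is more self-contained (no appeal to Section~4, no induction), and it yields as a byproduct the fact, stated but not argued separately in the paper, that the five canonical forms represent pairwise distinct elements of $HK_{Z_n}$. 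Your atomic maps are the correct ones for the Section~3 definition ($\theta_a(a)=0$, $\theta_b(b)=\sum_i 2^iv_i$, $\theta_{v_i}(v_i)=a$); be aware that the formulas displayed in the paper's own proof instead track images along \emph{outgoing} edges ($a\mapsto\sum_i v_i$, $v_i\mapsto 2^ib$, $b\mapsto 0$), i.e.\ the transposed convention, so your tables will not literally match the paper's even though both separate the same elements.

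One claim in your outline is an overstatement and must be repaired (your own earlier plan already contains the repair): the pair $\bigl(R_f([\w'])(a),R_f([\w'])(b)\bigr)$ does \emph{not} separate all canonical words, nor even the five types. For instance, the type (4) word $v_3abv_4$ and the type (5) word $v_3bv_4a$ have the same content, and under the Section~3 convention both send $a\mapsto 0$ and $b\mapsto 2^3a+\sum_{i\geq 4}2^iv_i$; they are distinguished only by the action on $v_4$ (sent to $0$ by the first, to $a$ by the second). So the injectivity argument must use the full matrix: from $R_f([\w'])(b)$ you recover $\mathfrak{c}(\w_1)$ via the binary expansion of the $a$-coefficient --- this is exactly where the weights $2^i$ are indispensable, since in type (4) the vanishing $v_i$-coefficients only reveal $\mathfrak{c}(\w_1)\cup\mathfrak{c}(\w_2)$ --- and then the sets $\{j: v_j\mapsto 0\}$ and $\{j: v_j\mapsto a\}$ recover the remaining blocks; finally the normalization $\w_2\neq\varepsilon$ in type (5) rules out the single potential collision with type (4), exactly as you anticipated. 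With that correction your plan goes through.
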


\begin{proof}
We prove the claim by induction on $n$ in the claim ``the elements of $Z_n$ are described as above and $R_f$ is an effective representation of $HK_{Z_n}$'' for $n\geq 3$. The base case, $n=3$, is obvious. Assume that the claim is true for some $n-1\geq 3$. Now we want to prove the claim for $n$.

First we note that $R_f([\w])(x)=x$ if and only if $x\not\in\mathfrak{c}(\w)$. This means that $R_f$ distinguishes
elements with different contents from each other. Assume first that the content of  $\w$ satisfies
$a\not\in \mathfrak{c}(\w)$ or $b\not\in \mathfrak{c}(\w)$. Then $[\w]$ belongs to some
$HK_{\Gamma'}$ for which $\Gamma'$ is a path connected subgraph of $\G$ and is a finite edge-disjoint union 
of graphs of type $A_2$. Since 
$\G$ has no oriented cycles, there is an $HK_{\Gamma'}$-sub\-quotient of $R_f$ isomorphic, as a vector space,
to $\oplus_{v\in V(\G')}\mathcal{R}v$ with the induced action (i.e. everything outside this space is treated
as zero). From the results of the previous section it follows that the restriction of $R_f$ to the action of
$HK_{\Gamma'}$ on $\oplus_{v\in V(\G')}\mathcal{R}v$ is effective and hence separates different elements with content 
$\mathfrak{c}(\w)$.

If $a,b\in\mathfrak{c}(\w)$ but some $v_i\not\in \mathfrak{c}(\w)$. Without loss of generality assume that $i=n$. This brings us to $Z_{n-1}$, which by the induction assumption is fine.

Finally, it remains to show that $R_f$ separates elements of the maximal content, that is elements described
in the last two cases \eqref{eq71} and \eqref{eq72} before the theorem with the condition
$\mathfrak{c}(\w_1)\cup\mathfrak{c}(\w_2)\cup\mathfrak{c}(\w_3)=\{v_3,v_4,\dots,v_n\}$.

A direct computation shows that the element $\w_1a\w_2b\w_3$ from \eqref{eq71} acts on the basis elements of 
$\oplus_{v\in V(\G)}\mathcal{R}v$ as follows:
\begin{gather*}
a\mapsto 0;\quad\quad b\mapsto \sum_{v_i\not\in \mathfrak{c}(\w_1)}f_{av_i}v_i+
\sum_{v_i\in \mathfrak{c}(\w_1)}f_{av_i}f_{v_ib}a;\\ 
\mathfrak{c}(\w_3)\ni v_i\mapsto 0;\quad\quad 
\mathfrak{c}(\w_2)\ni v_i\mapsto f_{v_ib}b;\quad\quad 
\mathfrak{c}(\w_1)\setminus \mathfrak{c}(\w_3)\ni v_i\mapsto f_{v_ib}b.
\end{gather*}
Similarly, the element $\w_1b\w_2a\w_3$ from \eqref{eq72} acts as follows:
\begin{gather*}
a\mapsto 0;\quad\quad b\mapsto \sum_{v_i\not\in \mathfrak{c}(\w_1)\setminus \mathfrak{c}(\w_2)}f_{av_i}v_i+
\sum_{v_i\in \mathfrak{c}(\w_1)\setminus \mathfrak{c}(\w_2)}f_{av_i}f_{v_ib}a;\\ 
\mathfrak{c}(\w_3)\ni v_i\mapsto 0;\quad\quad 
\mathfrak{c}(\w_2)\ni v_i\mapsto 0;\quad\quad 
\mathfrak{c}(\w_1)\setminus \mathfrak{c}(\w_3)\ni v_i\mapsto f_{v_ib}b.
\end{gather*}
Comparing these two formulae it is easy to see that different words act differently, in particular, they 
define different elements in $HK_{Z_n}$. 
\end{proof}
	
We can describe the graphs whose HK-monoids we have constructed effective representations for. We do it in three steps. 
\begin{enumerate}
\item Pick a forest $\G'$ whose edges are undirected.
\item Direct every edge such that every vertex in $\G'$ becomes either a source or a sink. Call the resulting graph $\G''$
\item Replace every edge in $\G''$ independently with one of the following:
\begin{enumerate}
	\item a linearly ordered graph of type $A_n,n\geq 1$,
	\item a "Kiselman graph" $\kappa_n,n\geq 2$,
	\item or some $Z_n$ for $n\geq 4$,
\end{enumerate}
matching source to source and sink to sink. Call the resulting graph $\G$.
\end{enumerate}	
By choosing any (integer) weights on the linearly ordered parts, weights according to \cite{KM09} on the "Kiselman graph"-parts and weights according to the previous theorem on the $Z_n$-parts one obtains an effective representation of $HK_\G$ over any ground ring $\mathcal{R}\supset\mathbb{Z}$.

\begin{exmp}
		A graph that we can reach with the three step algorithm
		\begin{enumerate}
		\item Pick a forest:
		\[\xymatrix{\bullet\ar@{-}[rrr]\ar@{-}[ddd]&&&\bullet\ar@{-}[dd]\\
		\\
		&&\bullet\ar@{-}[dll]&\bullet\ar@{-}[dl]\ar@{-}[d]&\bullet\ar@{-}[d]\\
		\bullet\ar@{-}[r]&\bullet&\bullet&\bullet&\bullet}\]
		\item Direct the edges so that vertices become \emph{only} sources and sinks:
		\[\xymatrix{\bullet\ar[rrr]\ar[ddd]&&&\bullet\ar@{<-}[dd]\\
		\\
		&&\bullet\ar[dll]&\bullet\ar[dl]\ar[d]&\bullet\ar@{<-}[d]\\
		\bullet\ar@{<-}[r]&\bullet&\bullet&\bullet&\bullet}\]
		\item Replace edges with linearly ordered $A_n$-graphs, graphs $\kappa_n$ or graphs $Z_n$:
		\[\xymatrix{&\\ \bullet\ar[d]\ar[r]\ar@(ur,ul)[rr]\ar@(u,u)[rrr]&\bullet\ar[r]\ar@(ur,ul)[rr]&\bullet\ar[r]&%
					\bullet\ar@{<-}[dll]\ar@{<-}[dl]\ar@{<-}[d]\ar@{<-}[dr]\\
		\bullet\ar[d]&\bullet\ar@{<-}[drr]&\bullet\ar@{<-}[dr]&\bullet\ar@{<-}[d]&\bullet\ar@{<-}[dl]\\
		\bullet\ar[d]&\bullet\ar[dl]&\bullet\ar[l]\ar[dll]&\bullet&\bullet\\
		\bullet&\bullet\ar[l]&\bullet\ar@{<-}[ur]&\bullet\ar@{<-}[u]&\bullet\ar[u]
		}\]
		\end{enumerate}
\end{exmp}
	

\end{document}